\newtheorem{theorem}{Theorem}[section]
\newtheorem{proposition}{Proposition}[section]
\newtheorem{lemma}{Lemma}[section]
\theoremstyle{definition}
\newtheorem{remark}{Remark}[section]
\newcommand{\Z}{\mathbb{Z}}
\newcommand{\Q}{\mathbb{Q}}
\newcommand{\K}{\mathbb{K}}
\newcommand{\C}{\mathbb{C}}
\newcommand{\F}{\mathbb{F}}
\newcommand{\E}{\mathbb{E}}
\begin{document}

\title[Arithmetic matrices for number fields II]{Arithmetic matrices for number fields II: Parametrization of rings by binary forms}

\author[Samuel A. Hambleton]{Samuel A. Hambleton}

\address{School of Mathematics and Physics, The University of Queensland, St. Lucia, Queensland, Australia 4072}

\email{sah@maths.uq.edu.au}

\subjclass[2010]{Primary 11R04, 11R33; Secondary 11C20, 11D57}

\date{August 29, 2018.}

\keywords{parametrization of rings, arithmetic matrix, ring of integers}

\begin{abstract}
We show that binary forms of degree n less than seven parameterize rings, generalizing a result of Levi on binary cubic forms parameterizing cubic rings, which can be related to results of Bhargava. The question of whether or not a binary form of degree n parameterizes a ring of rank $n$ depends on a symbolic calculation with matrices. 
\end{abstract}

\maketitle

\section{Introduction}\label{intro}

The parametrization of cubic rings was investigated by Delone and Faddeev \cite[pp. 106]{DnF1}, who showed that two binary cubic forms that belong to the same $\text{GL}_2(\Z )$-class of binary cubic forms produce the same ring up to isomorphism. That is, if $\mathcal{C}(x, y) = (a,b,c,d)$ can be transformed into $\overline{\mathcal{C}}(x, y) = \left( \overline{a}, \overline{b}, \overline{c}, \overline{d} \right)$ by replacing $\left(
\begin{array}{c}
 x \\
 y \\
\end{array}
\right)$ with $M \left(
\begin{array}{c}
 x \\
 y \\
\end{array}
\right)$, where $M \in \text{GL}_2(\Z )$, and we write $\mathcal{C} \circ M = \overline{\mathcal{C}}$ when this is the case, then one obtains two isomorphic cubic rings from $\mathcal{C} $ and $\overline{\mathcal{C}}$. Delone remarks \cite[pp. xiii]{DnF1} that this result is due to Friedrich Wilhelm Levi \cite{Levi2} also developed by Delone \cite[pp. 106]{DnF1}. The result has been generalized by Bhargava \cite{Bhargone3, Bhargone4}, who introduced the concept of resolvent rings to deal with the enormous calculations which emerge in considering binary forms of degree greater than $3$. In these articles pairs or quadruples of forms were used to parameterize rings of ranks $4$ and $5$. The aim of this article is to introduce a technique to actually perform the large symbolic computations that come about from parameterizing rings of rank $n$ by binary forms of degree $n$ with the hope that the method works in general. We prove the following result. 
\begin{proposition}\label{mainparam}
Let $\mathcal{B}_n = \left( a_1, a_2, \dots , a_{n+1} \right)$ be a binary form of degree $n$, where $3 \leq n \leq 6$, so that $a_1, a_2, \dots , a_{n+1}$ are any rational integers such that $a_1 a_{n+1} \not= 0$ and
\begin{equation*}
\mathcal{B}_n(x, y) = \sum_{k = 1}^{n+1} a_k x^{n+1-k} y^{k-1} 
\end{equation*}
is irreducible in $\Q [x, y]$. Let 
\begin{equation*}
M = \left(
\begin{array}{cc}
 p & q \\
 r & s \\
\end{array}
\right) \in \text{GL}_2(\Z ) . 
\end{equation*}
Then the binary form $\mathcal{B}_n \circ M$ obtained by replacing $\left(
\begin{array}{c}
 x \\
 y \\
\end{array}
\right)$ in $\mathcal{B}_{n}(x, y)$ with $M \left(
\begin{array}{c}
 x \\
 y \\
\end{array}
\right)$ parameterizes a ring $\mathcal{R}_{M} = \left[ 1 , \psi_1, \psi_2, \dots , \psi_{n-1} \right] $ that is isomorphic to the ring $\mathcal{R}$ parameterized by $\mathcal{B}_n$, $\mathcal{R} = \left[ 1 , \phi_1, \phi_2, \dots , \phi_{n-1} \right] $, where the $\phi_j$ are of the form
\begin{align*}
 \phi_j & = \sum_{k = 1}^{j} a_k \zeta^{j+1-k} \ (j > 1) , & \phi_0 & = 1,
\end{align*}
and $\zeta $ is a root of $\mathcal{B}_n(x, 1)$. 
\end{proposition}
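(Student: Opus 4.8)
The plan is to build the isomorphism from the action of $M$ on roots. Write $M=\begin{pmatrix}p&q\\r&s\end{pmatrix}$ and let $b_1,\dots,b_{n+1}$ be the coefficients of $\mathcal{B}_n\circ M$, so that (by the same recipe as for the $\phi_j$) $\psi_j=\sum_{k=1}^j b_k\eta^{j+1-k}$ for a root $\eta$ of $(\mathcal{B}_n\circ M)(x,1)$. Since $(\mathcal{B}_n\circ M)(x,1)=\mathcal{B}_n(px+q,rx+s)$ and $\mathcal{B}_n(X,Y)=a_1\prod_i(X-\zeta_iY)$, one has $\mathcal{B}_n\!\bigl(\tfrac{p\eta+q}{r\eta+s},1\bigr)=(r\eta+s)^{-n}(\mathcal{B}_n\circ M)(\eta,1)=0$, so I would use $\zeta\mapsto(p\eta+q)/(r\eta+s)$ to define a $\Q$-algebra homomorphism $\sigma\colon\Q(\zeta)\to\Q(\eta)$ (here $r\eta+s$ is a nonzero, hence invertible, element of the field $\Q(\eta)$, and $\det M\ne0$ keeps $\sigma$ nonconstant); comparing degrees shows $\sigma$ is an isomorphism. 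Because $\sigma$ is automatically a ring homomorphism, everything reduces to showing $\sigma(\mathcal{R})=\mathcal{R}_M$, i.e. that the change-of-basis matrix between $(1,\sigma(\phi_1),\dots,\sigma(\phi_{n-1}))$ and $(1,\psi_1,\dots,\psi_{n-1})$ lies in $\mathrm{GL}_n(\Z)$.

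Next I would reduce to generators of $\mathrm{GL}_2(\Z)$. Admissibility (irreducibility together with $a_1a_{n+1}\ne0$) is preserved under $M$, $\mathcal{B}_n\circ(MN)=(\mathcal{B}_n\circ M)\circ N$, and the maps $\sigma$ compose as the corresponding M\"obius transformations; hence the set of $M\in\mathrm{GL}_2(\Z)$ for which $\sigma$ identifies $\mathcal{R}$ with $\mathcal{R}_M$ for \emph{every} admissible form is a subgroup of $\mathrm{GL}_2(\Z)$. It therefore suffices to treat the generators $S=\begin{pmatrix}0&-1\\1&0\end{pmatrix}$, $T=\begin{pmatrix}1&1\\0&1\end{pmatrix}$, $D=\begin{pmatrix}-1&0\\0&1\end{pmatrix}$. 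For $D$, the form $\mathcal{B}_n\circ D$ has coefficients $(-1)^{n+1-k}a_k$ and $\zeta=-\eta$, and a one-line substitution gives $\psi_j=(-1)^{n+j}\phi_j$, so $\mathcal{R}_D=\mathcal{R}$. For $S$, the form $\mathcal{B}_n\circ S$ has coefficients $b_k=(-1)^{k-1}a_{n+2-k}$ and $\zeta=-1/\eta$; substituting and then using $\mathcal{B}_n(\zeta,1)=0$ to clear the negative powers of $\zeta$ that appear should give
\begin{align*}
\psi_j=(-1)^{j+1}\bigl(\phi_{n-j}+a_{n+1-j}\bigr)\qquad(1\le j\le n-1),
\end{align*}
whence $\mathcal{R}_S=\mathcal{R}$ again, since $\{\phi_{n-j}\}_{j=1}^{n-1}=\{\phi_1,\dots,\phi_{n-1}\}$ and the $a_{n+1-j}$ are integers.

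The remaining case $M=T$ is where I expect the real work. Here $\mathcal{B}_n\circ T$ has coefficients $b_k=\sum_{m=1}^k a_m\binom{n+1-m}{k-m}$ and $\zeta=\eta+1$, so $\psi_j=\sum_{k=1}^j b_k(\zeta-1)^{j+1-k}$ is a polynomial in $\zeta$ of degree $j$ with leading coefficient $b_1=a_1$; consequently $\psi_j=\phi_j+\sum_{i=0}^{j-1}c_{ji}\phi_i$ (with $\phi_0=1$) for unique rationals $c_{ji}$, and the change-of-basis matrix $(c_{ji})$ is lower triangular with $1$'s on the diagonal. The hard part will be to prove $c_{ji}\in\Z$: this is \emph{not} formal, since a general element of $\Z[\zeta]$ need not expand integrally in the $\phi_i$, and it depends on the binomial structure of the $b_k$ — equivalently, on the matrix identity relating the multiplication tables of $\mathcal{B}_n$ and $\mathcal{B}_n\circ T$ by conjugation by $(c_{ji})\in\mathrm{GL}_n(\Z)$. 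Carrying out this identity is precisely the symbolic matrix computation described in the introduction; doing it for $3\le n\le6$ — the same range in which the $\phi$-module is known to be closed under multiplication, which is needed even to formulate the statement — then completes the proof, the rest of the argument being uniform in $n$.
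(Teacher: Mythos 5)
Your route is genuinely different from the paper's. The paper fixes a general $M$, writes down an explicit candidate change-of-basis matrix $T$ (first row the $t_{1j}$, lower-right block $m$ times the $(n-1)$-st symmetric power of $M$), proves a lemma reducing the proposition to the single matrix identity $a_1^{n-1}A^{-1}QBT^{-1}=\bigl(N_{\mathcal{O}}^{(a_1p-r\phi_1)}\bigr)^{n-1}$, and then verifies that identity by displaying the symbolic computation for each of $n=3,4,5,6$. You instead exploit the group structure: admissibility is preserved under $\mathrm{GL}_2(\Z)$, $\mathcal{B}_n\circ(MN)=(\mathcal{B}_n\circ M)\circ N$, and the field maps compose, so the set of good $M$ is a subgroup and it suffices to check the generators $S$, $T$, $D$. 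Your computations for $D$ and $S$ are correct: I verified $\psi_j=(-1)^{n+j}\phi_j$ for $D$, and for $S$ the identity $\psi_j=(-1)^{j+1}(\phi_{n-j}+a_{n+1-j})$ follows from dividing $\sum_k a_k\zeta^{n+1-k}=0$ by $\zeta^{j}$. This reduction is a real gain, since it would shrink the paper's general-$(p,q,r,s)$ verification to the single specialization $p=q=s=1$, $r=0$.

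However, the proposal has a genuine gap exactly where the content of the proposition lives: the translation generator. You correctly isolate the issue --- integrality of the $c_{ji}$ in $\psi_j=\phi_j+\sum_{i<j}c_{ji}\phi_i$ --- and correctly observe that it is not formal, but you do not prove it; you only assert that it is ``precisely the symbolic matrix computation described in the introduction.'' It is not literally that computation (it is its specialization to $M=T$), and no closed form for the $c_{ji}$, no divisibility argument, and no explicit verification for $3\le n\le 6$ is supplied. For instance, matching $\zeta^{j-1}$-coefficients gives $c_{j,j-1}=(e_{j,j-1}-a_2)/a_1$ where $e_{j,j-1}$ is the $\zeta^{j-1}$-coefficient of $\sum_k b_k(\zeta-1)^{j+1-k}$; one must check $a_1\mid e_{j,j-1}-a_2$ (it holds: $e_{j,j-1}=(n-j)a_1+a_2$), and similarly all the way down the triangle. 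Until that integrality is actually established, the proof is incomplete --- though your framework reduces it to a much smaller finite check than the paper performs. A minor side remark: closure of $[1,\phi_1,\dots,\phi_{n-1}]$ under multiplication comes from the arithmetic-matrix formulas for all $n$, not only $3\le n\le 6$; the restriction on $n$ in the proposition is due solely to the range in which the verification has been carried out.
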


In using the term parametrization, we emphasize that the integers $a_1, a_2, \dots , a_{n+1}$ give rise to a ring, and if two binary forms belong to the same $\text{GL}_2(\Z)$-class, then the binary forms give rise to the same ring. Also, there is no reason to believe that Proposition \ref{mainparam} does not hold when $n > 6$. In fact, the author conjectures that it holds for all positive integers greater than $2$. Further, we show that verification of the result for any $n > 6$ can be done with a calculation. However, before we prove Proposition \ref{mainparam}, we first need to introduce the appropriate tools.   

In \cite{hamblarithnf} the following proposition was proved. It generalizes the arithmetic matrices introduced in \cite{hamblbrahm, cfg} and facilitates basic arithmetic in the ring of integers of a number field by doing so with $n$ by $n$ matrices.  The entries of these matrices are taken to belong to $\Z \left[ x_0, x_1, x_2, \dots , x_n \right]$, where the $x_0, x_1, x_2, \dots , x_n$ are coefficients of generators of a particular integral basis for the ring of integers of the number field it refers to. Very often the $x_0, x_1, x_2, \dots , x_n$ will be rational integers. 

\begin{proposition}\label{mainres}
Let $\E = \Q (\zeta )$ be a number field of degree $n$ over $\Q$, where $\zeta $ is a root of the irreducible polynomial $f(x) = a_1 x^n + a_2 x^{n-1} + \dots + a_{n+1} \in \Z [x]$ with roots $\zeta_0, \zeta_1, \dots , \zeta_{n-1}$; $\zeta_0 = \zeta $. Let 
\begin{align}\label{intbasn}
\mathcal{O}_{\E } & = \left[ \rho_0 , \rho_1 , \rho_2 , \dots , \rho_{n-1} \right] , & \rho_j & = \sum_{k = 1}^{j} a_k \zeta^{j+1-k} \ (j > 1) , & \rho_0 & = 1.
\end{align}
be an integral basis for the ring of integers $\mathcal{O}_{\E }$ of $\E $. Let $\kappa_{0}, \kappa_1, \dots , \kappa_{n-1}$ be the embeddings of $\E$ in $\C$ so that $\kappa_{t}\left( \zeta_{u} \right) = \zeta_{v}$, $v \equiv t + u \pmod{n}$. Let $\Gamma_{\E} = \left[ \kappa_{i-1} \left( \rho_{j-1} \right) \right]$ and let $\Theta_{\E }^{(\alpha )} = \left[ \delta_{ij} \kappa_{i-1} \left( \alpha \right) \right] $, where $\delta_{ij}$ is the Kronecker-delta symbol, ($1$ if $i=j$ and $0$ otherwise) so that $\Theta_{\E }^{(\alpha )}$ is a diagonal matrix. Let $N_{\E}^{(\alpha )} = \Gamma_{\E}^{-1} \Theta_{\E }^{(\alpha )} \Gamma_{\E}$, and let $M_{\E}^{(\alpha )} = \left[ a_{ij} \right]$, where the entries $a_{ij}$ of $M_{\E}^{(\alpha )}$ are given by 
\begin{eqnarray}
\label{defaijone} a_{11} & = & x_0 , \\
\label{defaijtwo} a_{i1} & = & x_{i-1} , \ \text{for} \ i > 1 , \\
\label{defaijthreee} a_{1j} & = & - a_{n+1} \sum_{k = 1}^{j-1} a_k x_{k+n-j}, \ \text{for} \ j > 1 , \\
\label{defaijfour} a_{ij} & = &  \sum_{k = 1}^{j-1} a_k x_{k+i-j-1} \ \text{for} \ i > j > 1 , \\
\label{defaijfive} a_{ij} & = & \delta_{ij} x_0 - \sum_{k = j}^{m} a_k x_{k+i-j-1} \ \text{for} \ j \geq i > 1 , \\
\nonumber m & = & \min (n - i + j, n + 1) .
\end{eqnarray} 
Then the following properties of the matrix $N_{\E}^{(\alpha )}$ hold for $\alpha , \beta \in \mathcal{R} \left[ \rho_0, \rho_1, \dots , \rho_{n-1} \right]$, where $\mathcal{R}$ is a commutative ring with $x_0, x_1, \dots , x_{n-1} \in \mathcal{R}$, $\alpha = \sum_{j=0}^{n-1} x_j \rho_j$. 
\begin{enumerate}
\item $N_{\E}^{(\alpha )} = M_{\E}^{(\alpha )}$.
\item $N_{\E}^{(\alpha )} + N_{\E}^{(\beta )} = N_{\E}^{(\alpha + \beta )}$.
\item $N_{\E}^{(\alpha )} N_{\E}^{(\beta )} = N_{\E}^{(\beta )} N_{\E}^{(\alpha )} = N_{\E}^{(\alpha \beta )}$.
\item The trace of the matrix $N_{\E}^{(\alpha )}$ is equal to the trace of $\alpha $.
\item The determinant of the matrix $N_{\E}^{(\alpha )}$ is equal to the norm of $\alpha $.
\item $N_{\E}^{(1 / \alpha )} = \left( N_{\E}^{(\alpha )} \right)^{-1}$. 
\item $N_{\E}^{(\alpha )}$ has entries in $\Z$ if and only if $\alpha \in \mathcal{O}_{\E }$. 
\end{enumerate}
\end{proposition}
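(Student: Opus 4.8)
The proof rests on property (1); granting it, properties (2)--(7) follow by standard considerations. First I would record that $\rho_0,\rho_1,\dots,\rho_{n-1}$ (with $\rho_1=a_1\zeta$, the $j=1$ instance of the displayed formula) form a $\Q$-basis of $\E$: the matrix expressing them in the power basis $1,\zeta,\dots,\zeta^{n-1}$ is lower triangular with diagonal entries $1,a_1,\dots,a_1$, so its determinant $a_1^{\,n-1}$ is nonzero. In particular $\Gamma_{\E}$ is invertible, since $(\det\Gamma_{\E})^{2}$ is the discriminant of this basis and $\E/\Q$ is separable. Applying the $n$ embeddings $\kappa_t$ to any relation $\alpha\rho_{j-1}=\sum_{i}c_{ij}\rho_{i-1}$ turns it into the matrix identity $\Theta_{\E}^{(\alpha)}\Gamma_{\E}=\Gamma_{\E}\,[c_{ij}]$; hence the matrix $[c_{ij}]$ of the multiplication-by-$\alpha$ operator $L_\alpha$ on $\E$, expressed in the basis $\rho_0,\dots,\rho_{n-1}$, equals $\Gamma_{\E}^{-1}\Theta_{\E}^{(\alpha)}\Gamma_{\E}=N_{\E}^{(\alpha)}$.

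Thus property (1) asserts that this matrix is $M_{\E}^{(\alpha)}$, and from it everything else follows: (2) is additivity of $\alpha\mapsto L_\alpha$; (3) is its multiplicativity together with commutativity of $\E$ (equivalently, the diagonal matrices $\Theta_{\E}^{(\alpha)},\Theta_{\E}^{(\beta)}$ commute and conjugation preserves products); (4) and (5) hold because trace and determinant are conjugation invariant, so $\operatorname{tr} N_{\E}^{(\alpha)}=\operatorname{tr}\Theta_{\E}^{(\alpha)}=\sum_{t}\kappa_t(\alpha)$ is the trace of $\alpha$ and $\det N_{\E}^{(\alpha)}=\det\Theta_{\E}^{(\alpha)}=\prod_{t}\kappa_t(\alpha)$ is the norm of $\alpha$; (6) is $L_{1/\alpha}=L_\alpha^{-1}$ for $\alpha\neq 0$; and (7) holds because, by \eqref{defaijone}--\eqref{defaijtwo}, the first column of $M_{\E}^{(\alpha)}$ is $(x_0,x_1,\dots,x_{n-1})^{\mathsf T}$, so integrality of $M_{\E}^{(\alpha)}$ forces every $x_j\in\Z$ and hence $\alpha\in[\rho_0,\dots,\rho_{n-1}]=\mathcal O_{\E}$, while conversely $\alpha\in\mathcal O_{\E}$ makes the $x_j$ integers and then each entry of $M_{\E}^{(\alpha)}$, an integer-coefficient polynomial in the $x_j$ and the $a_k$, lies in $\Z$.

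It remains to prove (1). Both $\alpha\mapsto N_{\E}^{(\alpha)}$ and $\alpha\mapsto M_{\E}^{(\alpha)}$ are $\Q$-linear in the coordinates $x_0,\dots,x_{n-1}$ of $\alpha$ (for $M_{\E}^{(\alpha)}$ this is visible in \eqref{defaijone}--\eqref{defaijfive}), so it suffices to check $N_{\E}^{(\rho_j)}=M_{\E}^{(\rho_j)}$ for $j=0,1,\dots,n-1$; equivalently, that the $c$-th column of the matrix obtained from \eqref{defaijone}--\eqref{defaijfive} by putting $x_j=1$ and all other $x$'s equal to $0$ lists the coordinates of $\rho_j\rho_{c-1}$ in the basis $\rho_0,\dots,\rho_{n-1}$. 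The multiplication table of the order is generated by the recursion $\zeta\rho_j=\rho_{j+1}-a_{j+1}\zeta$ for $j\ge 1$ (read off from $\rho_j=\sum_{k=1}^{j}a_k\zeta^{j+1-k}$), the two boundary relations $\rho_n=-a_{n+1}$ and $\rho_{n+1}=0$ (which are $f(\zeta)=0$ and $\zeta f(\zeta)=0$ rewritten), and the substitution $a_1\zeta=\rho_1$. Iterating the recursion gives $\zeta^{m}\rho_j=\rho_{j+m}-\sum_{\ell=1}^{m}a_{j+\ell}\zeta^{m+1-\ell}$ while $j+m\le n+1$, and beyond that one continues with bare powers of $\zeta$, reducing $\zeta^{n}$ by $f(\zeta)=0$; substituting $\rho_i=\sum_k a_k\zeta^{i+1-k}$ into $\rho_i\rho_j$, expanding, and collecting the coefficients of $\rho_0,\dots,\rho_{n-1}$ then reproduces the case split of \eqref{defaijthreee}--\eqref{defaijfive}. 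A handy shortcut, obtained from the same recursion, is the ladder identity $\rho_i\rho_j=\rho_{i+1}\rho_{j-1}+a_j\rho_{i+1}-a_{i+1}\rho_j$, which reduces every product to the near-diagonal cases $\rho_k\rho_k$ and $\rho_k\rho_{k+1}$. For a general coefficient ring $\mathcal R$ one finally observes that (1)--(5) are equalities of matrices whose entries are fixed polynomials in the coordinates of $\alpha$ (and of $\beta$) and in $a_1,\dots,a_{n+1}$, so it is enough to verify them over $\Q(a_1,\dots,a_{n+1})$ with all these quantities transcendental --- where $f$ is irreducible and the argument above applies --- and then specialize along the evident ring homomorphism.

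The conceptual steps above are routine; the real obstacle is the bookkeeping inside the reduction of $\rho_i\rho_j$ to the basis --- keeping track of precisely which reductions via $f(\zeta)=0$ take place, which is exactly what the cutoff $m=\min(n-i+j,\,n+1)$ in \eqref{defaijfive} encodes, and verifying that the powers of $a_1$ introduced when one solves for an intermediate power $\zeta^{p}$ in terms of the $\rho$'s all cancel, so that each entry of $M_{\E}^{(\alpha)}$ really is a polynomial in $a_1,\dots,a_{n+1}$ without denominators. In a written-out proof I would carry a few representative families through in full --- multiplication by $\rho_1$ (where the factor $-a_{n+1}$ of \eqref{defaijthreee} first appears), a product with no wrap-around such as $\rho_2\rho_2$, and a maximally wrapped one such as $\rho_{n-1}^{2}$ --- and then note that the general entry follows the pattern already recorded in \eqref{defaijthreee}--\eqref{defaijfive}; the exhaustive entry-by-entry check is the one performed in \cite{hamblarithnf}.
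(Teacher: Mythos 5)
The paper itself contains no proof of this proposition: it is imported verbatim from the companion article \cite{hamblarithnf}, so there is no in-text argument to compare yours against. Judged on its own terms, your proposal takes the standard and correct route. Identifying $N_{\E}^{(\alpha)}=\Gamma_{\E}^{-1}\Theta_{\E}^{(\alpha)}\Gamma_{\E}$ with the matrix of multiplication by $\alpha$ in the basis $\rho_0,\dots,\rho_{n-1}$ (your computation $\Theta_{\E}^{(\alpha)}\Gamma_{\E}=\Gamma_{\E}[c_{ij}]$ is right) immediately delivers (2)--(6), your triangularity argument shows the $\rho_j$ are a $\Q$-basis with $\Gamma_{\E}$ invertible, and your treatment of (7) via the first column of $M_{\E}^{(\alpha)}$ is correct, using in one direction the standing hypothesis that the $\rho_j$ are an integral basis. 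The machinery you set up for (1) also checks out: the recursion $\zeta\rho_j=\rho_{j+1}-a_{j+1}\zeta$ for $j\ge 1$, the boundary relations $\rho_n=-a_{n+1}$ and $\rho_{n+1}=0$, and the ladder identity $\rho_i\rho_j=\rho_{i+1}\rho_{j-1}+a_j\rho_{i+1}-a_{i+1}\rho_j$ are all correct (both sides of the last equal $\zeta\rho_i\rho_{j-1}$ plus the stated corrections), and the verification against the cubic case \eqref{arithcubic} succeeds.

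The one substantive reservation is that the step you defer --- confirming that this reduction of $\rho_i\rho_j$ actually reproduces the explicit entries \eqref{defaijthreee}--\eqref{defaijfive}, including the cutoff $m=\min(n-i+j,n+1)$ and the factor $-a_{n+1}$ in the first row --- is the entire computational content of the proposition; carrying through only ``representative families'' leaves the claimed closed forms unverified in general. That is a reasonable proof plan, but a complete write-up must either perform the general induction on the indices or cite \cite{hamblarithnf} for it, as the present paper does. A minor further point: for a general commutative ring $\mathcal{R}$ your specialization argument covers the polynomial identities (1)--(3), while (4)--(6) should be read, as you implicitly do, over the field $\E$ where trace, norm, and $1/\alpha$ have their usual meaning.
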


This proposition can be used to more efficiently multiply algebraic integers when several multiplications are required, such as might be necessary when working with the integral bases of ideals, as pointed out in \cite{hamblarithnf}.  

In order to work with binary forms that do not necessarily have discriminant equal to that of a number field of degree $n$, we must define the matrix 
\begin{equation*}
N_{\mathcal{O}}^{(\alpha )} = \Gamma_{\mathcal{O}}^{-1} \Theta_{\mathcal{O}}^{(\alpha )} \Gamma_{\mathcal{O}}, 
\end{equation*}
where $\Gamma_{\mathcal{O}} = \left[ \kappa_{i-1} \left( \phi_{j-1} \right) \right]$ and $\mathcal{O} = \left[ 1, \phi_1, \phi_2 , \dots , \phi_{n-1} \right]$ is an order of $\E = \Q ( \zeta ) $. In other words we permit the $a_1, a_2, \dots , a_{n+1}$ to be any rational integers such that $a_1 a_{n+1} $ is non-zero and $\mathcal{B}_n(x, y)$ is irreducible in $\Q [x, y]$. We must assume that the generators of $\mathcal{O}$ have the form
\begin{align}\label{genform}
 \phi_j & = \sum_{k = 1}^{j} a_k \zeta^{j+1-k} \ (j > 1) , & \phi_0 & = 1.
\end{align}
Hence $N_{\mathcal{O}}^{(\alpha )}$ gives a matrix representation of the arithmetic of the order $\mathcal{O}$. All of the identities in Proposition \ref{mainres} still hold when the discriminant of the binary form $\mathcal{B}_n = \left( a_1, a_2, \dots , a_{n+1} \right)$ is not the discriminant of a field of degree $n$ over $\Q$; however the order $\mathcal{O}$ in which $N_{\mathcal{O}}^{(\alpha )}$ performs arithmetic will not be the maximal order of the field generated by a root $\zeta $ of $\mathcal{B}_n(x, 1) $.

\section{Cubic rings}

Now we are able to illustrate how the arithmetic matrices are helpful in understanding the Levi correspondence. Let $n = 3$ and let $\mathcal{C} = (a,b,c,d)$ be a binary cubic form so that $a, b, c, d$ are any four rational integers such that $ad \not= 0$ and $\mathcal{C}(x, y)$ is irreducible over $\Q$. The formulas given in Proposition \ref{mainres} above provide the arithmetic matrix for the order $\mathcal{O}$ of the cubic field $\K = \Q (\zeta )$ given by
\begin{align}\label{arithcubic}
N_{\mathcal{O}}^{(\alpha )} & = \left(
\begin{array}{ccc}
 u & -a d y & -d (a x+b y) \\
 x & u-b x-c y & -c x-d y \\
 y & a x & u-c y \\
\end{array}
\right) , & \alpha & = u + x \phi_1 + y \phi_2 , 
\end{align}
where $\phi_1 = a \zeta $ and $\phi_2 = a \zeta^2 + b \zeta $. Since $(a,b,c,d)$ looks better than $\left( a_1 , a_2 , a_3 , a_4 \right)$, we have replaced these accordingly to produce $N_{\mathcal{O}}^{(\alpha )}$ from the formulas in Proposition \ref{mainres}. When $\mathcal{C}$ does not have discriminant equal to that of a cubic field, then $\mathcal{A} = \{ 1 , \phi_1 , \phi_2 \}$ is not an integral basis for the ring of integers; instead, $\mathcal{A}$ simply generates an order $\mathcal{O}$ of $\K$. 

We have 
\begin{align*}
 N_{\mathcal{O}}^{\left( \phi_1 \right) } & = \left(
\begin{array}{ccc}
 0 & 0 & -a d \\
 1 & -b & -c \\
 0 & a & 0 \\
\end{array}
\right) ,  & N_{\mathcal{O}}^{\left( \phi_2 \right) } & = \left(
\begin{array}{ccc}
 0 & -a d & -b d \\
 0 & -c & -d \\
 1 & 0 & -c \\
\end{array}
\right) .
\end{align*} 
We calculate the matrices
\begin{align*}
\left( N_{\mathcal{O}}^{\left( \phi_1 \right) } \right)^2 & = \left(
\begin{array}{ccc}
 0 & -a^2 d & 0 \\
 -b & b^2-a c & b c-a d \\
 a & -a b & -a c \\
\end{array}
\right) , \\ 
 N_{\mathcal{O}}^{\left( \phi_1 \right) } N_{\mathcal{O}}^{\left( \phi_2 \right) } & = \left(
\begin{array}{ccc}
 -a d & 0 & a c d \\
 -c & b c-a d & c^2 \\
 0 & -a c & -a d \\
\end{array}
\right) , \\
\left( N_{\mathcal{O}}^{\left( \phi_2 \right) } \right)^2 & = \left(
\begin{array}{ccc}
 -b d & a c d & a d^2+b c d \\
 -d & c^2 & 2 c d \\
 -c & -a d & c^2-b d \\
\end{array}
\right) .
\end{align*}
By considering the first column of these matrices we obtain the following multiplication formulas for the ring with basis $\mathcal{A} = \left\{ 1, \phi_1, \phi_2 \right\}$.
\begin{eqnarray}
\label{cubrone} \phi_1^2 & = & - b \phi_1 + a \phi_2 , \\
\label{cubrtwo} \phi_1 \phi_2 & = & - a d - c \phi_1 , \\
\label{cubrthr} \phi_2^2 & = & - b d - d \phi_1 - c \phi_2 .
\end{eqnarray}
Alternatively, we could obtain the columns of interest and hence the coefficients of $\phi_0, \phi_1, \phi_2$ in the expressions \eqref{cubrone}, \eqref{cubrtwo} and \eqref{cubrthr} by computing 
\small
\begin{align*}
N_{\mathcal{O}}^{\left( \phi_1 \right) } \left(
\begin{array}{c}
 0  \\
 1  \\
 0  \\
\end{array}
\right) & = \left(
\begin{array}{c}
 0  \\
 -b \\
 -c  \\
\end{array}
\right) , & N_{\mathcal{O}}^{\left( \phi_1 \right) } \left(
\begin{array}{c}
 0  \\
 0  \\
 1  \\
\end{array}
\right) & = \left(
\begin{array}{c}
 -a d  \\
 -c \\
 0  \\
\end{array}
\right) , & N_{\mathcal{O}}^{\left( \phi_2 \right) } \left(
\begin{array}{c}
 0  \\
 0  \\
 1  \\
\end{array}
\right) & = \left(
\begin{array}{c}
 -b d  \\
 -d \\
 -c  \\
\end{array}
\right) .
\end{align*}
\normalsize 
By letting $\phi = \phi_1$ and $\psi = \phi_2 + c$, we obtain the isomorphic ring generated by $\{ 1, \phi , \psi \}$, where 
\begin{eqnarray*}
\phi^2 & = & - a c - b \phi + a \psi , \\
\phi \psi & = & - a d , \\
\psi^2 & = & - b d - d \phi + c \psi .
\end{eqnarray*}
It is these multiplication formulas that Delone and Faddeev \cite[pp. 106]{DnF1}, Bhargava \cite{Bhargone0} and others exhibited.

Next we show that if $M \in \text{GL}_2(\Z )$, then $\mathcal{C} \circ M$ parameterizes a ring isomorphic to the ring parameterized by $\mathcal{C}$, the Levi correspondence \cite{Levi2}.
\begin{theorem}\label{cubicpar}
Let $\mathcal{C} = (a,b,c,d)$ be a binary cubic form so that $a,b,c,d$ are any rational integers such that $\mathcal{C}(x, y)$ is irreducible over $\Q $. Let $$M = \left(
\begin{array}{cc}
 p & q \\
 r & s \\
\end{array}
\right) \in \text{GL}_2(\Z ) .$$ Then the binary cubic form $\mathcal{C} \circ M$ parameterizes a cubic ring $\mathcal{R}_{M}$ that is isomorphic to the cubic ring $\mathcal{R}$ parameterized by $\mathcal{C}$. 
\end{theorem}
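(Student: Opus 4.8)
The plan is to realize both rings as full-rank subrings (orders) of a single cubic field and to show that, so embedded, they coincide. I would first write $\overline{\mathcal{C}} := \mathcal{C}\circ M = (\bar a,\bar b,\bar c,\bar d)$. Since a linear substitution cannot split an irreducible homogeneous form, $\overline{\mathcal{C}}$ is irreducible over $\Q$, and $\bar a=\mathcal{C}(p,r)\ne 0$, $\bar d=\mathcal{C}(q,s)\ne 0$ because $(p,r),(q,s)\ne(0,0)$ and $\mathcal{C}$ has no rational root. Homogeneity of $\mathcal{C}$ gives $\overline{\mathcal{C}}(x,1)=(rx+s)^{3}\,\mathcal{C}\!\left(\tfrac{px+q}{rx+s},1\right)$, so setting $\bar\zeta:=\dfrac{s\zeta-q}{p-r\zeta}$ one checks $\dfrac{p\bar\zeta+q}{r\bar\zeta+s}=\zeta$, whence $\overline{\mathcal{C}}(\bar\zeta,1)=(r\bar\zeta+s)^{3}\mathcal{C}(\zeta,1)=0$ and $\Q(\bar\zeta)=\Q(\zeta)=\K$. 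Thus $\overline{\mathcal{C}}(x,1)$ is irreducible of degree $3$, and, exactly as demonstrated above for $\mathcal{C}$, the ring $\mathcal{R}_M$ parameterized by $\overline{\mathcal{C}}$ is faithfully realized as the order $\left[1,\psi_1,\psi_2\right]\subseteq\K$ with $\psi_1=\bar a\bar\zeta$, $\psi_2=\bar a\bar\zeta^{2}+\bar b\bar\zeta$, while $\mathcal{R}=\left[1,\phi_1,\phi_2\right]\subseteq\K$ with $\phi_1=a\zeta$, $\phi_2=a\zeta^{2}+b\zeta$.

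The substantive step is to express $\psi_1,\psi_2$ in terms of $1,\phi_1,\phi_2$. To invert $p-r\zeta$ in $\K$, note $\mathcal{C}(p,r)=r^{3}\mathcal{C}(p/r,1)=a\prod_i(p-r\zeta_i)$, so $N_{\K/\Q}(p-r\zeta)=\bar a/a$ and the factor $\bar a$ cancels in $\psi_1=\bar a\,(s\zeta-q)/(p-r\zeta)$; reducing the resulting element of $\Q(\zeta)$ modulo $\mathcal{C}(\zeta,1)=0$ would produce
\begin{equation*}
\psi_1=u_1+x_1\phi_1+y_1\phi_2,\qquad \psi_2=u_2+x_2\phi_1+y_2\phi_2,
\end{equation*}
and one then checks that the $u_i,x_i,y_i$, which are polynomials in $a,b,c,d,p,q,r,s$, are rational integers. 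I would organize this computation through Proposition~\ref{mainres}: forming the matrices $N_{\mathcal{O}}^{(\psi_1)},N_{\mathcal{O}}^{(\psi_2)}$ from the formulas applied to $(a,b,c,d)$ and verifying they have entries in $\Z$; their first columns are $\left(u_i,x_i,y_i\right)^{t}$, so by item~(7) this is precisely the assertion $\psi_1,\psi_2\in\mathcal{R}$. Either way one obtains $\mathcal{R}_M\subseteq\mathcal{R}$ inside $\K$; collecting the coefficients gives $T\in M_3(\Z)$ with $\left(1,\psi_1,\psi_2\right)^{t}=T\left(1,\phi_1,\phi_2\right)^{t}$.

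To promote this inclusion to equality I would compare discriminants: for binary cubic forms $\operatorname{disc}(\mathcal{C}\circ M)=(\det M)^{6}\operatorname{disc}(\mathcal{C})=\operatorname{disc}(\mathcal{C})$ since $\det M=\pm1$, while the discriminant of the ring attached to a binary cubic form equals the discriminant of the form (Delone--Faddeev \cite{DnF1}); hence $\operatorname{disc}(\mathcal{R}_M)=\operatorname{disc}(\mathcal{R})$, so $[\mathcal{R}:\mathcal{R}_M]^{2}=1$, i.e. $\det T=\pm1$ and $\mathcal{R}_M=\mathcal{R}$ as subrings of $\K$. In particular $\mathcal{R}_M\cong\mathcal{R}$, the isomorphism sending the distinguished generators $\psi_1,\psi_2$ of $\mathcal{R}_M$ to $u_i+x_i\phi_1+y_i\phi_2\in\mathcal{R}$.

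The hard part will be the symbolic computation of the second paragraph: clearing $p-r\zeta$, reducing modulo $\mathcal{C}(\zeta,1)$, and confirming that the coordinates of $\psi_1,\psi_2$ come out integral. For $n=3$ this is short and can be carried out by hand directly from the $3\times3$ matrices in \eqref{arithcubic}, but the same computation grows unwieldy for larger degree, and it is exactly the arithmetic matrices $N_{\mathcal{O}}^{(\alpha)}$ that make it tractable uniformly, which is why Theorem~\ref{cubicpar} serves as the prototype for Proposition~\ref{mainparam}.
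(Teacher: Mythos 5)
Your proposal is correct, but it reaches the conclusion by a genuinely different route from the paper. The common ground is the starting point: both you and the paper pass to the root $\tau=\bar\zeta=\frac{s\zeta-q}{p-r\zeta}$ of $(\mathcal{C}\circ M)(x,1)$ and reduce everything to expressing $\psi_1=\overline{a}\tau$, $\psi_2=\overline{a}\tau^2+\overline{b}\tau$ in the basis $\{1,\phi_1,\phi_2\}$, i.e.\ to producing the integer matrix $T$. After that the arguments diverge. The paper defines the map $\lambda$ by explicit formulas (the $t_{12},t_{13}$ and the $m(px+qy)$, $m(rx+sy)$ coordinates) and then \emph{verifies multiplicativity by brute force}, computing $\lambda(\alpha)\lambda(\beta)$ and $\lambda(\alpha\beta)$ with the arithmetic matrices and checking $u_7-u_5=x_7-x_5=y_7-y_5=0$; bijectivity then follows from the explicitly exhibited $T\in\mathrm{GL}_3(\Z)$. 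You instead realize both rings as orders inside the single field $\K$, so multiplicativity of the identification is automatic (both multiplications are restrictions of the field's), and you replace the explicit computation of $\det T$ by the discriminant identity $\operatorname{disc}(\mathcal{C}\circ M)=(\det M)^6\operatorname{disc}(\mathcal{C})$ together with the Delone--Faddeev fact that the discriminant of the ring $[1,a\zeta,a\zeta^2+b\zeta]$ equals $\operatorname{disc}(\mathcal{C})$, concluding $[\mathcal{R}:\mathcal{R}_M]=1$. Your route buys a cleaner logical structure --- no large polynomial identity to verify, and the index argument scales to any degree --- but it imports an external fact (discriminant of ring equals discriminant of form) that the paper never invokes and that would itself require proof in this framework for general $n$; the paper's route is self-contained and, more importantly for its purposes, produces the explicit $T$ and the formulas for $t_{1j}$ that are the template reused for $n=4,5,6$ and for Lemma~\ref{toprop}. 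Note also that you still owe the one computation you defer: showing that the coordinates of $\psi_1,\psi_2$ in the $\phi$-basis are integral is exactly the computation that yields the paper's $t_{12},t_{13}$ and the matrix \eqref{chbas}, so neither approach escapes it.
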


\begin{proof}
Let $\zeta $ be a root of $\mathcal{C}(x, 1)$. Then $\tau = \frac{s \zeta - q}{-r \zeta + p}$ is a root of $\overline{\mathcal{C}}(x, 1)$, where $\overline{\mathcal{C}} = \mathcal{C} \circ M$. Let $m = \det (M) \ (= \pm 1)$. We define a map 
\small 
\begin{align*}
\lambda  & : \mathcal{R}_{M} \longrightarrow \mathcal{R} , \\
\lambda & : u + x \psi_1 + y \psi_2 \longmapsto u + t_{12} x + t_{13} y + m (p x + q y) \phi_1 + m (r x + s y) \phi_2 ,
\end{align*}
\normalsize 
where
\begin{eqnarray*}
t_{12} & = & - \left( a q p^2 + b q r p + c q r^2 + d r^2 s \right) , \\
t_{13} & = & - \left( 2 a p q^2+b r q^2+b p s q+2 c r s q+2 d r s^2 \right) ,
\end{eqnarray*}
the generators are related by
\begin{align*}
\phi_1 & = a \zeta , & \phi_2 & = a \zeta^2 + b \zeta , \\ 
\psi_1 & = \overline{a} \tau , & \psi_2 & = \overline{a} \tau^2 + \overline{b} \tau ,
\end{align*}
and the form $\overline{\mathcal{C}} = \mathcal{C} \circ M$ has coefficients
\begin{align*}
\overline{a} & =  a p^3 + b p^2 r + c p r^2 + d r^3 , & & \\
\overline{b} & =  3 a p^2 q + b p^2 s + 2 b p q r + 2 c p r s + c q r^2 + 3 d r^2 s , & & \\
\overline{c} & =  3 a p q^2 + 2 b p q s + b q^2 r + c p s^2 + 2 c q r s + 3 d r s^2 , & & \\ 
\overline{d} & =  a q^3 + b q^2 s + c q s^2 + d s^3 . & & 
\end{align*}
To show that $\lambda$ is a ring homomorphism, let 
\begin{align*}
\alpha & = u_1 + x_1 \psi_1 + y_1 \psi_2 , & \beta & = u_2 + x_2 \psi_1 + y_2 \psi_2 ,
\end{align*}
Observe that 
\begin{eqnarray*}
\lambda (\alpha ) \lambda (\beta ) & = & \left( u_3 + x_3 \phi_1 + y_3 \phi_2 \right) \left( u_4 + x_4 \phi_1 + y_4 \phi_2 \right) , \\
                                   & = & u_5 + x_5 \phi_1 + y_5 \phi_2 , 
\end{eqnarray*}
where, using the arithmetic matrices we find that 
\begin{align*}
u_3 & =  u_1 + t_{12} x_1 + t_{13} y_1 ,  & u_4 & = u_2 + t_{12} x_2 + t_{13} y_2 , \\
x_3 & =  m \left( p x_1 + q y_1 \right) , & x_4 & = m \left( p x_2 + q y_2 \right) , \\
y_3 & =  m \left( r x_1 + s y_1 \right) , & y_4 & = m \left( r x_2 + s y_2 \right) , 
\end{align*}
and
\begin{eqnarray*}
u_5 & = & u_3 u_4 - a d x_4 y_3 - a d x_3 y_4 - b d y_3 y_4 , \\
x_5 & = & u_3 x_4 + u_4 x_3 - b x_3 x_4 - c x_3 y_4 - c x_4 y_3 - d y_3 y_4 , \\
y_5 & = & u_3 y_4 + u_4 y_3 + a x_3 x_4 - c y_3 y_4 .
\end{eqnarray*}
Now 
\begin{align*}
\alpha \beta  & = u_6 + x_6 \psi_1 + y_6 \psi_2 , & \lambda ( \alpha \beta ) & = u_7 + x_7 \phi_1 + y_7 \phi_2 ,
\end{align*} 
where 
\begin{eqnarray*}
u_6 & = & u_1 u_2 - \overline{a} \overline{d} x_2 y_1 - \overline{a} \overline{d} x_1 y_2 - \overline{b} \overline{d} y_1 y_2 , \\
x_6 & = & u_1 x_2 + u_2 x_1 - \overline{b} x_2 x_1 - \overline{c} x_1 y_2 - \overline{c} x_2 y_1 - \overline{d} y_1 y_2 , \\
y_6 & = & u_1 y_2 + u_2 y_1 + \overline{a} x_1 x_2 - \overline{c} y_1 y_2 , \\
u_7 & = & u_6 + t_{12} x_6 + t_{13} y_6 , \\
x_7 & = & m \left( p x_6 + q y_6 \right) , \\
y_7 & = & m \left( r x + s y \right) .
\end{eqnarray*}
Taking differences, we find that $u_7 - u_5 = x_7 - x_5 = y_7 - y_5  = 0$.

It follows that $\lambda (\alpha ) \lambda (\beta ) = \lambda (\alpha \beta )$. It is easy to show that $\lambda (\alpha ) + \lambda (\beta ) = \lambda (\alpha + \beta )$. To show that $\lambda $ is a ring isomorphism, assume that 
\begin{eqnarray*}
u + t_{12} x + t_{13} y & = & 0, \\
m (p x + q y) & = & 0 ,\\
m (r x + s y) & = & 0 .
\end{eqnarray*}
Then, since the matrix 
\begin{equation}\label{chbas}
T = \left(
\begin{array}{ccc}
 1 & t_{12} & t_{13} \\
 0 & m p  & m q  \\
 0 & m r  & m s  \\
\end{array}
\right) \in \text{SL}_3(\Z ) 
\end{equation}
is invertible, we must have $u = x = y = 0$. Thus the kernel of $\lambda$ as a group homomorphism under addition is trivial. If we assume that 
\begin{eqnarray*}
u + t_{12} x + t_{13} y & = & 1, \\
m (p x + q y) & = & 0 ,\\
m (r x + s y) & = & 0 , 
\end{eqnarray*}
Then we must have $u = 1$, $x = 0$, and $y = 0$. It follows that the kernel of $\lambda$ as a group homomorphism under multiplication is trivial. 

Finally, we must show that $\lambda $ is surjective. Consider the basis of the lattice $\mathscr{L}_1$ of $\K$ given by $$\overline{\mathcal{A}} = \left\{ 1, \psi_1, \psi_2 \right\} = \left\{ 1, \overline{a} \tau , \overline{a} \tau^2 + \overline{b} \tau \right\} .$$ The relationship between $\mathscr{L}$ with basis $\mathcal{A} = \left\{ 1, \phi_1, \phi_2 \right\}$ and $\mathscr{L}_1$ involves a change of lattice basis since 
\begin{equation}\label{basisch}
\Lambda_{\overline{\mathcal{A}}} = \Lambda_{\mathcal{A}} T , 
\end{equation}
where $T$ is given by \eqref{chbas}, and 
\begin{align*}
 \Lambda_{\mathcal{A}} & = \left(
\begin{array}{ccc}
 1 & \psi_1 & \psi_2 \\
 1 & \psi_1' & \psi_2' \\
 1 & \psi_1'' & \psi_2'' \\
\end{array}
\right) , &  \Lambda_{\mathcal{A}} & = \left(
\begin{array}{ccc}
 1 & \phi_1 & \phi_2 \\
 1 & \phi_1' & \phi_2' \\
 1 & \phi_1'' & \phi_2'' \\
\end{array}
\right) .
\end{align*}
Let $\alpha = u + x \phi_1 + y \phi_2 \in \mathcal{R}$. Multiplying \eqref{basisch} on the right by $T^{-1}$, we obtain an element $\gamma \in \mathcal{R}_{M}$ such that $\lambda (\gamma ) = \alpha $. Since we have shown that $\lambda $ is surjective, it follows that $\lambda$ is a ring isomorphism. 
\end{proof}

Theorem \ref{cubicpar} has the following converse. 
\begin{theorem}\label{convcub}
If $\mathcal{O}$ is an order of the cubic field $\K = \Q (\zeta )$, then there exists an integral binary cubic form $\overline{\mathcal{C}} = \left( \overline{a}, \overline{b}, \overline{c}, \overline{d} \right)$ such that $\overline{\mathcal{C}}$ parameterizes $\mathcal{O}$. 
\end{theorem}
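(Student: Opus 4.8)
The plan is to reverse the construction of Section~2: an order of $\K$ already carries a multiplication table, and after normalizing a $\Z$-basis that table is forced to be one coming from a binary cubic form. Since $\mathcal{O}$ is an order of $\K$ it is a subring of $\mathcal{O}_{\K}$, so every element of $\mathcal{O}$ is an algebraic integer; hence $\Q \cap \mathcal{O} = \Z$, the group $\mathcal{O}/\Z$ is torsion-free and thus free of rank $2$, and $\mathcal{O}$ has a $\Z$-basis of the form $\{1,\omega_1,\omega_2\}$. I would first normalize this basis: writing $\omega_1\omega_2 = n_0 + n_1\omega_1 + n_2\omega_2$ with $n_i \in \Z$ and replacing $\omega_1$ by $\omega_1 - n_2$ and $\omega_2$ by $\omega_2 - n_1$ (a change of basis in $\mathrm{SL}_3(\Z)$ that fixes $1$), one checks that the new product $\omega_1\omega_2$ lies in $\Z$. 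Keeping this basis, define integers $a,b,c,d$ and $m_0,\ell_0$ by
\begin{align*}
\omega_1^2 & = m_0 - b\omega_1 + a\omega_2, & \omega_2^2 & = \ell_0 - d\omega_1 + c\omega_2 .
\end{align*}

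Next I would use the associativity of the given ring $\mathcal{O}$. Expanding $(\omega_1^2)\omega_2 = \omega_1(\omega_1\omega_2)$ and $(\omega_1\omega_2)\omega_2 = \omega_1(\omega_2^2)$ and comparing coordinates in the basis $\{1,\omega_1,\omega_2\}$ forces $\omega_1\omega_2 = -ad$, $m_0 = -ac$ and $\ell_0 = -bd$. After the further integer shift $\omega_2 \mapsto \omega_2 - c$, the multiplication table of $\mathcal{O}$ becomes precisely \eqref{cubrone}, \eqref{cubrtwo}, \eqref{cubrthr} for the binary cubic form $\overline{\mathcal{C}} := (a,b,c,d)$, with $\omega_1$ and $\omega_2 - c$ playing the roles of $\phi_1$ and $\phi_2$; equivalently, $N_{\mathcal{O}}^{(\omega_1)}$ and $N_{\mathcal{O}}^{(\omega_2 - c)}$ are the specializations of \eqref{arithcubic}.

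It then remains to check that $\overline{\mathcal{C}}$ satisfies the standing hypotheses and genuinely parameterizes $\mathcal{O}$. If $a = 0$ then $\omega_1(\omega_1 + b) = \omega_1\omega_2 = 0$ in the integral domain $\mathcal{O}$, forcing $\omega_1 \in \{0,-b\} \subseteq \Z$, which contradicts that $\{1,\omega_1,\omega_2\}$ is a basis; the symmetric argument gives $d \neq 0$, so $ad \neq 0$. A direct computation from the table gives $\omega_1^3 + b\omega_1^2 + ac\,\omega_1 + a^2 d = 0$, so $\zeta := \omega_1/a \in \K$ is a root of $\overline{\mathcal{C}}(x,1) = ax^3 + bx^2 + cx + d$, and rearranging $\omega_1^2 = m_0 - b\omega_1 + a\omega_2$ shows $\omega_2 - c = a\zeta^2 + b\zeta$. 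Since $\mathcal{O} \otimes_{\Z} \Q = \K$ has degree $3$ over $\Q$ and equals $\Q[\zeta]$ (because $a \neq 0$), the degree-$3$ polynomial $\overline{\mathcal{C}}(x,1)$ is $a$ times the minimal polynomial of $\zeta$, hence irreducible over $\Q$. Therefore $\overline{\mathcal{C}}$ is admissible, the basis assigned to it by Proposition~\ref{mainres} is $\{1, a\zeta, a\zeta^2 + b\zeta\} = \{1,\omega_1,\omega_2 - c\}$, and the table above exhibits $\mathcal{O}$ as the ring parameterized by $\overline{\mathcal{C}}$.

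I expect the associativity computation to be the main, if entirely routine, step: it is the reverse of the consistency implicit in Section~2, and it is what pins the constant terms down to $-ac$, $-ad$, $-bd$. The only other place needing care is the last paragraph, where one uses that $\mathcal{O}$ is an order of a \emph{field} --- rather than an arbitrary commutative ring of rank $3$ --- to guarantee that $\overline{\mathcal{C}}$ is admissible, i.e. that $ad \neq 0$ and that $\overline{\mathcal{C}}(x,1)$ is irreducible over $\Q$.
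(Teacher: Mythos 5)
Your proof follows essentially the same route as the paper's: translate the basis so that $\omega_1\omega_2 \in \Z$, read the coefficients $\overline{a},\overline{b},\overline{c},\overline{d}$ off the multiplication table, and use associativity to force the constant terms $-\overline{a}\overline{c}$, $-\overline{a}\overline{d}$, $-\overline{b}\overline{d}$. You additionally spell out the associativity computation and verify that $\overline{a}\overline{d}\neq 0$ and that $\overline{\mathcal{C}}(x,1)$ is irreducible (details the paper delegates to \cite{cfg}), but the underlying argument is the same.
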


\begin{proof}
Following Bhargava, Shankar, and Tsimerman \cite{Bhargone0}, we reiterate what was said about this. Let $\mathcal{O} = [1, \omega , \theta ]$. We can assume with no loss of generality that $\omega \theta \in \Z$ since if not, we can translate so that this occurs. Now we have 
\begin{eqnarray*}
     \omega^2 & = & w_{11} + w_{12} \omega + w_{13} \theta , \\
\omega \theta & = & w_{21} , \\
     \theta^2 & = & w_{31} + w_{32} \omega +  w_{33} \theta ,
\end{eqnarray*}
for some rational integers $w_{ij}$ ($1 \leq i, j \leq 3$). By setting $\overline{\mathcal{C}} = \left( \overline{a}, \overline{b}, \overline{c}, \overline{d} \right) = \left( w_{13}, - w_{12}, w_{33}, -w_{32} \right)$, we find that $w_{11} = - a c$, $w_{21} = - a d$, and $w_{31} = - b d$ so that $\overline{\mathcal{C}}$ parameterizes $\mathcal{O}$. More details on this calculation were given in \cite[12--13]{cfg}.  
\end{proof}

In order to see how to generalize this theorem with the arithmetic matrices, we will need to consider the result from a slightly different point of view. Again let $\mathcal{O} = [1, \omega , \theta ]$. The order $\mathcal{O}$ of $\K$ will have multiplication formulas which can be expressed in the following way.
\begin{eqnarray*}
u_1 + x_1 \omega + y_1 \theta & = & u_1 + x_1 \omega + y_1 \theta , \\
u_1 \omega + x_1 \omega^2 + y_1 \omega \theta & = & u_2 + x_2 \omega + y_2 \theta , \\ 
u_1 \theta + x_1 \omega \theta + y_1 \theta^2 & = & u_3 + x_3 \omega + y_3 \theta .
\end{eqnarray*}
Applying the embeddings of $\K$, we can extend this system of equations to write
\begin{eqnarray*}
& & \left(
\begin{array}{ccc}
 u_1 + x_1 \omega + y_1 \theta & 0 & 0 \\
 0 & u_1 + x_1 \omega' + y_1 \theta' & 0 \\
 0 & 0 & u_1 + x_1 \omega'' + y_1 \theta'' \\
\end{array}
\right) \left(
\begin{array}{ccc}
 1 & \omega & \theta \\
 1 & \omega' & \theta' \\
 1 & \omega'' & \theta'' \\
\end{array}
\right) \\
 & = & \left(
\begin{array}{ccc}
 1 & \omega & \theta \\
 1 & \omega' & \theta' \\
 1 & \omega'' & \theta'' \\
\end{array}
\right) \left(
\begin{array}{ccc}
 u_1 & u_2 & u_3 \\
 x_1 & x_2 & x_3 \\
 y_1 & y_2 & y_3 \\
\end{array}
\right) = \Gamma_{\mathcal{O}} U . 
\end{eqnarray*}
In solving for $u_2$, $x_2$, $y_2$, $u_3$, $x_3$, $y_3$, under the assumption that $\omega = \overline{a} \zeta$, $\theta = \overline{a} \zeta^2 + \overline{b} \zeta $ for some $\overline{a}, \overline{b} \in \Z$, we see that there must exist some rational integers $ \overline{c}, \overline{d}$ such that 
\begin{equation*}
\left(
\begin{array}{ccc}
 u_1 & u_2 & u_3 \\
 x_1 & x_2 & x_3 \\
 y_1 & y_2 & y_3 \\
\end{array}
\right) = N_{\mathcal{O}}^{ \left( u_1 + x_1 \omega + y_1 \theta \right) } = \left(
\begin{array}{ccc}
 u_1 & - \overline{a} \ \overline{d} y_1 & - \overline{d} \left( \overline{a} x_1 + \overline{b} y_1 \right) \\
 x_1 & u_1 - \overline{b} x_1 - \overline{c} y_1 & - \overline{c} x_1 - \overline{d} y_1 \\
 y_1 & \overline{a} x_1 & u_1 - \overline{c} y_1 \\
\end{array}
\right) .
\end{equation*}
Furthermore, $\left( \overline{a}, \overline{b}, \overline{c}, \overline{d} \right)$ will be irreducible in $\Q [x, y]$.

\section{Quartic rings}

It is natural to suspect that we can use exactly the same procedure with binary quartic forms to parametrize quartic rings. Let $\mathcal{V} = (a,b,c,d,e)$ be a binary quartic form so that $a, b, c, d, e$ are any four rational integers such that $$\mathcal{V}(x, y) = a x^4 + b x^3 y + c x^2 y^2 + d x y^3 + e x^4 $$ is irreducible over $\Q$. The formulas given in Proposition \ref{mainres} above provide the arithmetic matrix for the order $\mathcal{O}$ of the quartic field $\F = \Q (\zeta )$ given by
\begin{align}\label{arithquartic}
N_{\mathcal{O}}^{(\alpha )} & = \left(
\begin{array}{cccc}
 u & -a e z & -e (a y+b z) & -e (a x+b y+c z) \\
 x & u-b x-c y-d z & -c x-d y-e z & -d x-e y \\
 y & a x & u-c y-d z & -d y-e z \\
 z & a y & a x+b y & u-d z \\
\end{array}
\right) ,  
\end{align}
where $\alpha = u + x \phi_1 + y \phi_2 + z \phi_3 $, $\phi_1 = a \zeta $, $\phi_2 = a \zeta^2 + b \zeta $, $\phi_3 = a \zeta^3 + b \zeta^2 + c \zeta $. As earlier mentioned about cubic orders, when the discriminant of $\mathcal{V}$ is not equal to the discriminant of a quartic field, the integral basis $\mathcal{A} = \{ 1 , \phi_1 , \phi_2 , \phi_3 \}$ does not generate the entire ring of integers of the field $\F$, but just an order $\mathcal{O}$ of $\F$. To give multiplication formulas as we did for cubic rings, we compute the matrix products 
\small 
\begin{align*}
N_{\mathcal{O}}^{\left( \phi_1 \right) } \left(
\begin{array}{ccc}
 0 & 0 & 0 \\
 1 & 0 & 0 \\
 0 & 1 & 0 \\
 0 & 0 & 1 \\
\end{array}
\right) & = \left(
\begin{array}{ccc}
 0 & 0 & - a e \\
 -b & - c & - d \\
 a & 0 & 0 \\
 0 & a & 0 \\
\end{array}
\right) , N_{\mathcal{O}}^{\left( \phi_2 \right) } \left(
\begin{array}{cc}
 0 & 0 \\
 0 & 0 \\
 1 & 0 \\
 0 & 1 \\
\end{array}
\right) & = \left(
\begin{array}{cc}
 - a e & - b e \\
 - d & - e \\
 - c & - d \\
 b & 0 \\
\end{array}
\right) , \\
 N_{\mathcal{O}}^{\left( \phi_3 \right) } \left(
\begin{array}{c}
 0 \\
 0 \\
 0 \\
 1 \\
\end{array}
\right) & = \left(
\begin{array}{c}
 -c e \\
 0 \\
 -e \\
 -d \\
\end{array}
\right) . & & 
\end{align*}
\normalsize 
This gives the multiplication formulas for the ring generated by the integral basis $\mathcal{A}$ of $\mathcal{O}$. 
\small 
\begin{align*}
\phi_1^2 & = - b \phi_1 + a \phi_2 , & \phi_1 \phi_2 & = - c \phi_1 + a \phi_3 , & \phi_1 \phi_3 & = - a e - d \phi_1 , \\ 
\phi_2^2 & = -a e - d \phi_1 - c \phi_2 + b \phi_3 , & \phi_2 \phi_3 & =  - b e - e \phi_1 - d \phi_2 , & \phi_3^2 & = - c e - e \phi_2 - d \phi_3 .
\end{align*}
\normalsize 
Of course, it is much easier to use the matrices $N_{\mathcal{O}}^{(\alpha )}$ and $N_{\mathcal{O}}^{(\beta )}$ to take sums and products of $\alpha , \beta \in \mathcal{R}$ than to use the above formulas. 

Next we ask whether Theorem \ref{cubicpar} has a quartic analogue. Bhargava \cite{Bhargone3} writes about this question, "However, since the jump in complexity from $k = 3$ to $k = 4$ is so large, this idea goes astray very quickly (yielding a huge mess!), and it becomes necessary to have a new perspective in order to make any further progress." The arithmetic matrices help quite a lot to condense the mess before it happens, however the formulas are still rather large when multiplied out. In order to understand how to find the appropriate change of basis, we recall how the matrix $T$ in \eqref{chbas} was found. Since 
\begin{align*}
 \tau & = \frac{s \zeta - q}{- r \zeta + p} , & \phi_1 & = a \zeta , & \phi_2 & = a \zeta^2 + b \zeta , 
\end{align*}
and $(a,b,c,d) \circ M = \left( a_1 , b_1 , c_1 , d_1 \right)$, where 
\begin{eqnarray*}
a_1 & = & a p^3 + b p^2 r + c p r^2 + d r^3 , \\
b_1 & = & 3 a p^2 q + b p^2 s + 2 b p q r + 2 c p r s + c q r^2 + 3 d r^2 s , 
\end{eqnarray*}
we are able to solve the following system for rational integers $t_{12}, t_{22}, t_{32}, t_{13}, t_{23}, t_{33}$. 
\begin{eqnarray*}
a_1 \tau & = & t_{12} + t_{22} \phi_1 + t_{32} \phi_2 , \\
a_1 \tau^2 + b_1 \tau & = & t_{13} + t_{23} \phi_1 + t_{33} \phi_2 .
\end{eqnarray*}
Using the arithmetic matrices to do so greatly simplifies the calculation. 

Returning now to the question of a quartic analogue of Theorem \ref{cubicpar}, let $M = \left(
\begin{array}{cc}
 p & q \\
 r & s \\
\end{array}
\right)$ and let $\mathcal{V} = (a,b,c,d,e)$ be irreducible in $\Q [x, y]$ with $a,b,c,d,e \in \Z$, $a e \not= 0$. Let $\mathcal{V}_1 = \mathcal{V} \circ M = \left( a_1, b_1, c_1, d_1, e_1 \right)$, where the substitution $x \longmapsto p x + q y$, $y \longmapsto r x + s y$ gives 
\small
\begin{eqnarray*}
\overline{a} & = & a p^4 + b p^3 r + c p^2 r^2 + d p r^3 + e r^4 , \\
\overline{b} & = & 4 a p^3 q + b p^3 s + 3 b p^2 q r + 2 c p^2 r s + 2 c p q r^2 + 3 d p r^2 s + d q r^3 + 4 e r^3 s , \\
\overline{c} & = & 6 a p^2 q^2+3 b p^2 q s+3 b p q^2 r+c p^2 s^2+4 c p q r s+c q^2 r^2+3 d p r s^2+3 d q r^2 s+6 e r^2 s^2 , \\
\overline{d} & = & 4 a p q^3+3 b p q^2 s+b q^3 r+2 c p q s^2+2 c q^2 r s+d p s^3+3 d q r s^2+4 e r s^3 , \\
\overline{e} & = & a q^4 +b q^3 s+c q^2 s^2+d q s^3+e s^4 . 
\end{eqnarray*}
\normalsize 
Let 
\begin{align*}
 \tau & = \frac{s \zeta - q}{- r \zeta + p} , & \phi_1 & = a \zeta , & \phi_2 & = a \zeta^2 + b \zeta , & \phi_3 & = a \zeta^3 + b \zeta^2 + c \zeta . 
\end{align*}
Then we must solve the system 
\begin{eqnarray}
\label{systone} \overline{a} \tau & = & t_{12} + t_{22} \phi_1 +  t_{32} \phi_2 +  t_{42} \phi_3 = \psi_2 , \\
\label{systtwo} \overline{a} \tau^2 + \overline{b} \tau & = & t_{13} + t_{23} \phi_1 +  t_{33} \phi_2 +  t_{43} \phi_3  = \psi_3, \\
\label{systthr} \overline{a} \tau^3 + \overline{b} \tau^2 + \overline{c} \tau & = & t_{14} + t_{24} \phi_1 +  t_{34} \phi_2 +  t_{44} \phi_3  = \psi_4 ,
\end{eqnarray} 
for the $t_{ij}$ so that the matrix $T = \left[ t_{ij} \right]$ with $t_{11} = 1$ and $t_{i1} = 0$, belongs to $\text{GL}_4(\Z )$. The system of equations \eqref{systone}, \eqref{systtwo}, \eqref{systthr} can be expressed as
\begin{equation}\label{solvB}
\Xi_{\mathcal{O}} Q B = \Theta_{\mathcal{O}}^{ \left( (p - r \zeta )^3 \right) } \Xi_{\mathcal{O}} A T  ,
\end{equation}
where $\zeta_0 = \zeta $, $\zeta_1, \zeta_2, \zeta_3 $ are the roots of $\mathcal{V}(x, 1)$, and 
\small 
\begin{align*}
\Xi_{\mathcal{O}} & = \left(
\begin{array}{cccc}
 1 & \zeta_0 & \zeta_0^2 & \zeta_0^3 \\
 1 & \zeta_1 & \zeta_1^2 & \zeta_1^3 \\
 1 & \zeta_2 & \zeta_2^2 & \zeta_2^3 \\
 1 & \zeta_3 & \zeta_3^2 & \zeta_3^3 \\
\end{array}
\right) ,  & A & = \left(
\begin{array}{cccc}
 1 & 0 & 0 & 0 \\
 0 & a & b & c \\
 0 & 0 & a & b \\
 0 & 0 & 0 & a \\
\end{array}
\right) ,  & B & = \left(
\begin{array}{cccc}
 1 & 0 & 0 & 0 \\
 0 & \overline{a} & \overline{b} & \overline{c} \\
 0 & 0 & \overline{a} & \overline{b} \\
 0 & 0 & 0 & \overline{a} \\
\end{array}
\right) , 
\end{align*}
\normalsize 
\begin{eqnarray*}
\Theta_{\mathcal{O}}^{ \left( (p - r \zeta )^3 \right) } & = & \left(
\begin{array}{cccc}
 \left( p - r \zeta_0 \right)^3 & 0 & 0 & 0 \\
 0 & \left( p - r \zeta_1 \right)^3 & 0 & 0 \\
 0 & 0 & \left( p - r \zeta_2 \right)^3 & 0 \\
 0 & 0 & 0 & \left( p - r \zeta_3 \right)^3 \\
\end{array}
\right) ,  \\
Q & = &
\left(
\begin{array}{cccc}
 p^3 & -p^2 q & p q^2 & -q^3 \\
 -3 p^2 r & s p^2+2 q r p & -2 p s q-q^2 r & 3 q^2 s \\
 3 p r^2 & -q r^2-2 p s r & p s^2+2 q r s & -3 q s^2 \\
 -r^3 & r^2 s & -r s^2 & s^3 \\
\end{array}
\right) .
\end{eqnarray*}
Taking determinants in \eqref{solvB} shows that $\det (T) = m^6 = 1$. So any solution $T$ to \eqref{solvB} must belong to $\text{SL}_4(\Z )$. Solving \eqref{solvB} for $T$, 
\begin{eqnarray}
\label{Bsolved} T & = & \left( \Xi_{\mathcal{O}} A \right)^{-1} \Theta_{\mathcal{O}}^{ \left( (p - r \zeta )^{-3} \right) } \left( \Xi_{\mathcal{O}} A \right) \left( A^{-1} Q B \right) , \\ 
\nonumber                  & = & \Gamma_{\mathcal{O}}^{-1} \Theta_{\mathcal{O}}^{ \left( (p - r \zeta )^{-3} \right) } \Gamma_{\mathcal{O}} \left( A^{-1} Q B \right) , \\ 
\nonumber                  & = & \left( N_{\mathcal{O}}^{ \left( p - r \zeta \right) } \right)^{-3} \left( A^{-1} Q B \right) .
\end{eqnarray}
Now the matrix $\overline{a} \left( N_{\mathcal{O}}^{ \left( p - r \zeta \right) } \right)^{-1}$ is equal to
\small
\begin{equation*}
 \left(
\begin{array}{cccc}
 a p^3+b p^2 r+c p r^2+d r^3 & -a e r^3 & -e r^2 (a p+b r) & -e r \left( a p^2+b p r+c r^2 \right) \\
 p^2 r & a p^3 & -r \left( c p^2+d p r+e r^2\right) & -p r (d p+e r) \\
 p r^2 & a p^2 r & p^2 (a p+b r) & -r^2 (d p+e r) \\
 r^3 & a p r^2 & p r (a p+b r) & p \left( a p^2+b p r+c r^2\right) \\
\end{array}
\right) .
\end{equation*}
\normalsize 
Expanding $\left( N_{\mathcal{O}}^{ \left( p - r \zeta \right) } \right)^{-3} \left( A^{-1} Q B \right)$ shows that 
\begin{equation}\label{quartbasch}
T = \left(
\begin{array}{cccc}
 1 & t_{12} & t_{13} & t_{14} \\
 0 & m p^2 & 2 m p q & m q^2 \\
 0 & m p r & m (p s+q r) & m q s \\
 0 & m r^2 & 2 m r s & m s^2 \\
\end{array}
\right) ,
\end{equation}
where 
\begin{eqnarray*}
m & = & \det (M) = p s - q r , \\
t_{12} & = & -\left( a p^3 q + b p^2 q r + c p q r^2 + d q r^3 + e r^3 s \right) , \\
t_{13} & = & -\left( 3 a p^2 q^2 + b p^2 q s + 2 b p q^2 r + 2 c p q r s + c q^2 r^2 + 3 e r^2 s^2 + 3 d q r^2 s \right) , \\
t_{14} & = & -\left( 3 a p q^3 + b q^3 r + 2 b p q^2 s + 2 c q^2 r s + c p s^2 q + 3 d q r s^2 + 3 e r s^3 \right) .
\end{eqnarray*}
Thus, using $T$ as a change of basis matrix, it is possible to prove the following generalization of Theorem \ref{cubicpar}. 

\begin{theorem}\label{quarticpar}
Let $\mathcal{V} = (a,b,c,d,e)$ be a binary quartic form so that $a,b,c,d,e$ are any rational integers such that $a e \not= 0$ and $\mathcal{V}(x, y)$ is irreducible in $\Q [x, y]$. Let $$M = \left(
\begin{array}{cc}
 p & q \\
 r & s \\
\end{array}
\right) \in \text{GL}_2(\Z ) .$$ Then the binary quartic form $\mathcal{V} \circ M$ parameterizes a quartic ring $\mathcal{R}_{M} = \left[ 1 , \psi_1, \psi_2, \psi_3 \right] $ that is isomorphic to the quartic ring $\mathcal{R} = \left[ 1 , \phi_1, \phi_2, \phi_3 \right] $ parameterized by $\mathcal{V}$. 
\end{theorem}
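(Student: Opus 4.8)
The plan is to follow the proof of Theorem~\ref{cubicpar} essentially verbatim, the only genuinely new ingredient being the change-of-basis matrix $T$ of \eqref{quartbasch}, which has already been extracted from \eqref{solvB}--\eqref{Bsolved}.

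First I would dispose of the preliminaries. Since $\mathcal{V}$ is irreducible of degree $4$ and $M\in\text{GL}_2(\Z)$, the form $\overline{\mathcal{V}}=\mathcal{V}\circ M$ is irreducible as well, and its outer coefficients $\overline{a}=\mathcal{V}(p,r)$, $\overline{e}=\mathcal{V}(q,s)$ are nonzero because $\mathcal{V}$ has no rational linear factor; moreover $\tau=(s\zeta-q)/(p-r\zeta)$ is a root of $\overline{\mathcal{V}}(x,1)$ and $\Q(\tau)=\Q(\zeta)=\F$. Consequently both $\mathcal{R}$ and $\mathcal{R}_M$ are orders of the one quartic field $\F$: by Proposition~\ref{mainres} in the form valid for orders, the ring $\mathcal{R}_M$ is realized as the concrete order $\mathscr{L}_1=[1,\psi_1,\psi_2,\psi_3]\subseteq\F$ with $\psi_1=\overline{a}\tau$, $\psi_2=\overline{a}\tau^2+\overline{b}\tau$, $\psi_3=\overline{a}\tau^3+\overline{b}\tau^2+\overline{c}\tau$, and likewise $\mathcal{R}$ is realized as $\mathscr{L}=[1,\phi_1,\phi_2,\phi_3]$; the arithmetic matrix in \eqref{arithquartic} and its barred analogue are literally the multiplication maps of $\mathscr{L}$ and $\mathscr{L}_1$ inside $\F$.

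Next I would introduce $\lambda:\mathcal{R}_M\to\mathcal{R}$, the map whose matrix in the bases $\{1,\psi_1,\psi_2,\psi_3\}$ and $\{1,\phi_1,\phi_2,\phi_3\}$ is $T$: the element $u+x\psi_1+y\psi_2+z\psi_3$ is sent to the element of $\mathcal{R}$ with $\phi$-coordinate vector $T(u,x,y,z)^{\mathrm t}$, the first-row entries $t_{12},t_{13},t_{14}$ and the $m$-multiples in the remaining rows being read off from \eqref{quartbasch}. Because the last three columns of $T$ are, by \eqref{systone}--\eqref{systthr}, exactly the $\phi$-coordinates of $\psi_1,\psi_2,\psi_3$, the map $\lambda$ is nothing but the identity of $\F$ rewritten through this change of basis; hence it is additive, and since $\det T=m^{6}=1$ the matrix $T^{-1}$ has integer entries as well, so $\lambda$ is a bijection of $\Z$-modules. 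In particular the columns of $T$ and of $T^{-1}$ give $\mathscr{L}_1\subseteq\mathscr{L}$ and $\mathscr{L}\subseteq\mathscr{L}_1$, so $\mathscr{L}=\mathscr{L}_1$ as subsets of $\F$.

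It remains to check that $\lambda$ is multiplicative. Conceptually this is now automatic: $\mathscr{L}=\mathscr{L}_1$ inside $\F$, both arithmetic matrices represent multiplication \emph{in $\F$}, and $\lambda$ is the identity of $\F$, so the two rings coincide and $\lambda$ is an isomorphism. If instead one wants to see it by the same computation as in Theorem~\ref{cubicpar}, one puts $\alpha=u_1+x_1\psi_1+y_1\psi_2+z_1\psi_3$, $\beta=u_2+x_2\psi_1+y_2\psi_2+z_2\psi_3$, computes the $\psi$-coordinates of $\alpha\beta$ with the barred arithmetic matrix $N^{(\alpha)}_{\mathcal{O}'}$ and the $\phi$-coordinates of $\lambda(\alpha)\lambda(\beta)$ with the unbarred $N^{(\lambda(\alpha))}_{\mathcal{O}}$ of \eqref{arithquartic}, and verifies the matrix identity $T\,N^{(\alpha)}_{\mathcal{O}'}=N^{(\lambda(\alpha))}_{\mathcal{O}}\,T$---equivalently, that the six products $\psi_i\psi_j$ with $1\le i\le j\le 3$ transform correctly---as polynomial identities in $a,\dots,e,p,q,r,s$ and the coordinates of $\alpha$. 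Surjectivity is then immediate from $T\in\text{SL}_4(\Z)$: for $\alpha=u+x\phi_1+y\phi_2+z\phi_3\in\mathcal{R}$ the integer vector $T^{-1}(u,x,y,z)^{\mathrm t}$ exhibits a preimage, exactly as in the cubic case via $\Lambda_{\overline{\mathcal{A}}}=\Lambda_{\mathcal{A}}T$. The one place where work could pile up is the matrix identity $T\,N^{(\alpha)}_{\mathcal{O}'}=N^{(\lambda(\alpha))}_{\mathcal{O}}\,T$ if one insists on a brute-force verification, but this is precisely the symbolic computation the arithmetic matrices are built to keep finite and mechanical; the one substantively new step beyond the cubic argument, namely the explicit shape of $T$ together with $\det T=1$, has already been carried out in \eqref{solvB}--\eqref{quartbasch}.
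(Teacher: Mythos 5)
Your proposal is correct and follows essentially the same route as the paper: the substantive content is the change-of-basis matrix $T$ of \eqref{quartbasch} with $\det T = m^6 = 1$, after which the argument is the cubic proof of Theorem \ref{cubicpar} transported verbatim. Your ``conceptual'' shortcut for multiplicativity (the two lattices coincide inside $\F$ and both arithmetic matrices represent multiplication in $\F$) is exactly the observation the paper records in the Remark of Section 5 and in Lemma \ref{toprop}, so it is a legitimate abbreviation of the brute-force check $T\,N^{(\alpha)}_{\mathcal{O}'}=N^{(\lambda(\alpha))}_{\mathcal{O}}\,T$ rather than a different method.
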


An analogue of Theorem \ref{convcub} also holds for quartic rings since the argument after the proof easily extends to rings of rank an arbitrary positive integer $n \geq 3$, provided that the $\phi_j$ are known to be of the form \eqref{genform}. Bhargava \cite{Bhargone3} found that certain pairs of ternary quadratic forms parameterize quartic rings, those pairs that are linearly independent over $\Q$. However, this may still be made to agree, as we might show this using syzygys of classical invariant theory. Recall from \cite{hamblarithnf} that the Diophantine equation $ \det \left( N_{\mathcal{O}}^{(\alpha )} \right) = 1$ can be expressed, where $t$ is the trace of $\alpha$, as
\begin{equation}\label{quartnorm}
t^4 - 2 \mathcal{G} t^2 - 8 \mathcal{H} t  + \mathcal{F} = 256 .
\end{equation}
This equation is analogous to the Pell equation $t^2 - \Delta y^2 = 4$, where $t$ is the trace of $\frac{1}{2} \left( x + y \sqrt{\Delta } \right)$. In the case of quartics,
\begin{eqnarray*}
\mathcal{G}(x, y, z) & = & (3 b^2 - 8 a c) x^2 + (4 b c - 24 a d) x y + (4 c^2 - 8 b d - 16 a e) y^2 \\
                     &   & + (2 b d - 32 a e) x z + (4 c d - 24 b e) y z + (3 d^2 - 8 c e) z^2 ,
\end{eqnarray*}
$\mathcal{H}(x, y, z)$ is a homogeneous ternary cubic polynomial, and $\mathcal{F}(x, y, z)$ is a homogeneous ternary quartic polynomial. Let $I$ and $J$ denote the invariants
\begin{align*}
I & = 12 a e - 3 b d + c^2, & J & = 72 a c e + 9 b c d - 27 a d^2 - 27 b^2 e - 2 c^3 .
\end{align*}
Following Cremona \cite{Cremona}, the ternary forms $\mathcal{F}(x, y, z)$, $\mathcal{G}(x, y, z)$, $\mathcal{H}(x, y, z)$ satisfy the syzygy 
\begin{equation*}
g_4^3 - 48 g_4 I v^2 - 64 J v^3 = 27 g_6^2 
\end{equation*}
where $g_4 = \mathcal{G} \left( x^2, x, 1 \right)$, $g_6 = \mathcal{H} \left( x^2, x, 1 \right)$, and $v = \mathcal{V} (x, 1)$. Thus is seems plausible that there may be a relationship between the binary quartic form $\mathcal{V} (x, y)$ and a pair of ternary quadratic forms. However, the rings parameterized by $\mathcal{V} (x, y)$ are those of the form $\left[ 1, a \zeta , a \zeta^2 + b \zeta , a \zeta^3 + b \zeta^2 + c \zeta \right]$. We have not shown that all quartic rings can be expressed in this way.

\section{Quintic rings}

Parametrization of quintic rings has also been investigated by Bhargava \cite{Bhargone4}. In this section we compute the change of basis matrix for the quintic ring parameterized by the binary quintic form $\mathcal{Q}(x, y) = (a,b,c,d,e,f)$. Following the same recipe as in earlier sections, let
\begin{equation*}
Q = \left(
\begin{array}{ccccc}
 p^4 & -p^3 q & p^2 q^2 & -p q^3 & q^4 \\
 -4 p^3 r & p^3 s + 3 p^2 q r & - 2 p^2 q s - 2 p q^2 r  & q^3 r + 3 p q^2 s & - 4 q^3 s \\
 6 p^2 r^2 & - 3 p^2 r s - 3 p q r^2 & q^2 r^2 + 4 p q r s + p^2 s^2 & - 3 q^2 r s - 3 p s^2 q & 6 q^2 s^2 \\
 -4 p r^3 & q r^3 + 3 p r^2 s & -2 q r^2 s - 2 p r s^2 & p s^3 + 3 q r s^2 & - 4 q s^3 \\
 r^4 & - r^3 s & r^2 s^2 & - r s^3 & s^4 \\
\end{array}
\right) .
\end{equation*}
The element in the $i$-th row and $j$-th column of $Q$ is the coefficient of $\zeta^{i-1}$ in the expansion of $(-r \zeta + p)^{n - j} (s \zeta - q)^{j - 1}$. 

We must calculate $T = \left( N_{\mathcal{O}}^{ \left( p - r \zeta \right) } \right)^{-4} \left( A^{-1} Q B \right)$. Doing so gives
\begin{eqnarray*}
t_{12} & = & -a q p^4-b q r p^3-c q r^2 p^2-d q r^3 p-e q r^4-f r^4 s , \\
t_{13} & = & -4 a q^2 p^3-b q s p^3-3 b q^2 r p^2-2 c q r s p^2-2 c q^2 r^2 p-3 d q r^2 s p-d q^2 r^3 \\
       &   & -4 f r^3 s^2-4 e q r^3 s , \\
t_{14} & = & -6 a p^2 q^3-c r^2 q^3-3 b p r q^3-3 b p^2 s q^2-3 d r^2 s q^2-4 c p r s q^2-c p^2 s^2 q \\
       &   & -6 e r^2 s^2 q-3 d p r s^2 q-6 f r^2 s^3 , \\
t_{15} & = & -4 a p q^4-b r q^4-3 b p s q^3-2 c r s q^3-2 c p s^2 q^2-3 d r s^2 q^2-d p s^3 q \\
       &   & -4 e r s^3 q-4 f r s^4 ;
\end{eqnarray*}
\begin{equation*}
T = \left(
\begin{array}{ccccc}
 1 & t_{12} & t_{13} & t_{14} & t_{15} \\
 0 & p^3 m & 3 p^2 q m & 3 p q^2 m & q^3 m \\
 0 & p^2 r m & p m (p s + 2 q r) & q m (2 p s + q r) & q^2 s m \\
 0 & p r^2 m & r m (2 p s + q r) & s m (p s + 2 q r) & q s^2 m \\
 0 & r^3 m & 3 r^2 s m & 3 r s^2 m & s^3 m \\
\end{array}
\right) . 
\end{equation*}
$T$ is the change of basis for parametrization of quintic rings by binary quintic forms, since when $m = \det (M)$, $M \in \text{GL}_2(\Z )$, $\det (T) = m^{10} = 1$ so $T \in \text{SL}_{5}(\Z )$.  

\section{Rings of rank $n$}

Now that we understand how to obtain a change of basis matrix $T$ under the assumption that there is a ring bijection, we are able to see the general formula for the change of basis matrix $T$ for an arbitrary $n > 2$. In this section we will start by defining the matrix $T$ in a way that we expect it to be a change of basis matrix, and show that a calculation proves that it is. We begin this section with the following remark. 

\begin{remark}
Let $\F$ be a number field of degree $n$ over $\mathbb{Q}$, and let the collection of algebraic integers $\mathcal{A} = \{ 1, \phi_1, \phi_2, \dots , \phi_{n-1} \}$ be an integral basis of an order $\mathcal{O}$ of $\F$. Let $T \in \text{Sl}_{n}(\mathbb{Z})$. If the lattice generated by $ (1, \phi_1, \phi_2, \dots , \phi_{n-1}) T$ is also an order $\mathcal{O}'$ of $\F$, then there is ring isomorphism $\lambda_{T} : \mathcal{O} \longrightarrow \mathcal{O}'$ given by $\lambda_{T} : \mathcal{A} \longmapsto \overline{\mathcal{A}}$, where $\overline{\mathcal{A}}$ is the basis obtained by expanding $(1, \phi_1, \phi_2, \dots , \phi_{n-1}) T$. Injectivity and surjectivity of $\lambda_{T}$ are clear since $T \in \text{Sl}_{n}(\mathbb{Z})$. We obtain the ring isomorphism by transport of the ring structure of $\mathcal{O}$ and verifying that the imposed structure on $\mathcal{O}'$ agrees with the original structure of $\mathcal{O}'$.  
\end{remark} 

Define a map 
\begin{align*}
 \omega & : \text{GL}_2 (\Z ) \longrightarrow \text{SL}_n(\Z ), & \omega & : M \longmapsto T , 
\end{align*}
where 
\begin{align*}
M & = \left(
\begin{array}{cc}
 p & q \\
 r & s \\
\end{array}
\right) , & T & = \left(
\begin{array}{cc}
 1 & G \\
 H & m P \\
\end{array}
\right) , 
\end{align*}
$H = (0,0, \dots , 0)^T$, $m = \det (M)$, $G = \left( t_{12}, t_{13}, \dots , t_{1n} \right)$, and $P$ is an $n-1$ by $n-1$ matrix also of determinant $m$ for which we give a general description below. Observe that $\omega $ is well defined since $\det (T) = m \det (P) = 1$ and after properly defining $P$, it will be clear that $\omega $ is injective.  

When $n = 3$, and $4$ respectively we have 
\begin{align*}
P & = \left(
\begin{array}{cc}
  p & q  \\
  r & s  \\
\end{array}
\right) = M , & P & = \left(
\begin{array}{ccc}
  p^2 & 2 p q & q^2 \\
  p r & p s + q r & q s \\
  r^2 & 2 r s & s^2 \\
\end{array}
\right) .
\end{align*}
When $n = 5$, we have 
\begin{equation*}
P = \left(
\begin{array}{cccc}
  p^3 & 3 p^2 q & 3 p q^2 & q^3 \\
  p^2 r & 2 p q r + p^2 s & q^2 r + 2 p q s & q^2 s \\
  p r^2 & q r^2 + 2 p r s & 2 q r s + p s^2 & q s^2 \\
  r^3 & 3 r^2 s & 3 r s^2 & s^3 \\
\end{array}
\right) .
\end{equation*}
Entries in the $i$-th row and $j$-th column of the $n - 1$ by $n - 1$ matrix $P$ can be seen as the coefficient of $x^{j-1}$ in the expansion of $(p + q x)^{n - 1 - i} (r + s x)^{i - 1} $, equal to
\begin{equation*}
\left( \sum_{h = 1}^{n + 1 - i} \binom{n - 1 - i}{h - 1} p^{n - i - h} q^{h - 1} x^{h - 1} \right) \left( \sum_{k = 1}^{i} \binom{i-1}{k-1} r^{i-k} s^{k-1} x^{k-1} \right) . 
\end{equation*}
The entries in the first row of $T$, noting that the following $j$ is not the same as the $j$ in the description of $P$ as they are different by $1$, are the $t_{1j}$ for $j = 2, 3, \dots , n+1$, defined by 
\begin{equation*}
\sum_{j=2}^{n+1} t_{1j} x^{j-2} = - q \mathcal{B}_{n-1} (p + q x, r + s x) - a_{n+1} s (r + s x)^3 ,
\end{equation*}
where we can discard $t_{1,n+1}$. This completes the general definition of the change of basis matrix $T$. We define the $n \times n$ matrix $Q$ by stating that the element in the $i$-th row and $j$-th column of $Q$ is the coefficient of $x^{i-1}$ in the expansion of $(-r x + p)^{n - j} (s x - q)^{j - 1}$. We let
\begin{align*}
A & = \left(
\begin{array}{cccccc}
 1 & 0 & 0 & 0 & \dots & 0 \\
 0 & a_1 & a_2 & a_3 & \dots & a_{n-1} \\
 0 & 0 & a_1 & a_2 & \dots & a_{n-2} \\
 0 & 0 & 0 & a_1 & \dots & a_{n-3} \\
 \vdots & \vdots & \vdots & \vdots & \ddots & \vdots \\
 0 & 0 & 0 & 0 & \dots & a_1 \\
\end{array}
\right) , & B & = \left(
\begin{array}{cccccc}
 1 & 0 & 0 & 0 & \dots & 0 \\
 0 & b_1 & b_2 & b_3 & \dots & b_{n-1} \\
 0 & 0 & b_1 & b_2 & \dots & b_{n-2} \\
 0 & 0 & 0 & b_1 & \dots & b_{n-3} \\
 \vdots & \vdots & \vdots & \vdots & \ddots & \vdots \\
 0 & 0 & 0 & 0 & \dots & b_1 \\
\end{array}
\right) , 
\end{align*}
where the $b_j$ ($j = 1, \dots , n - 1$) are given by $\mathcal{B}_n \circ M = \left( b_1, b_2, \dots , b_{n+1} \right)$, and 
\begin{equation*}
p - r \zeta = \frac{1}{a_1} \left( a_1 p - r \phi_1 \right) .
\end{equation*} 
The coefficients $b_1, b_2, \dots b_{n+1}$ can be obtained by expanding the right hand side of 
\begin{equation*}
\sum_{i = 1}^{n+1} b_i x^{i-1} = \mathcal{B}_n (p + q x, r + s x) .
\end{equation*}
We have
\begin{equation*}
N_{\mathcal{O}}^{ \left( a_1 p - r \phi_1 \right) } = \left(
\begin{array}{cccccccc}
 p a_1 & 0 & 0 & 0 & \dots & 0 & 0 & r a_1 a_{n+1} \\
 -r & p a_1+r a_2 & r a_3 & r a_4 & \dots & r a_{n-2} & r a_{n-1} & r a_n \\
 0 & -r a_1 & p a_1 & 0 & \dots & 0 & 0 & 0 \\
 0 & 0 & -r a_1 & p a_1 & \dots & 0 & 0 & 0 \\
 \vdots & \vdots & \vdots & \vdots & \ddots & \vdots & \vdots & \vdots \\
 0 & 0 & 0 & 0 & \dots & -r a_1 & p a_1 & 0 \\
 0 & 0 & 0 & 0 & \dots & 0 & -r a_1 & p a_1 \\
\end{array}
\right) .
\end{equation*}
In order to complete the proof of Proposition \ref{mainparam}, we must show that \eqref{siam} below holds. This means that the proof of Proposition \ref{mainparam} is reduced to a symbolic computation with matrices. The calculation in \eqref{Bsolved} will work more generally, as the following lemma shows.

\begin{lemma}\label{toprop}
Let $n$ be a positive integer greater than $1$. Let $A$, $B$, $M$, $Q$, $T = \left[ t_{ij} \right]$ and $\mathcal{B}(x, y) = \left( a_1, a_2, \dots , a_{n+1} \right)$ be as defined in this section. If 
\begin{equation}\label{siam}
a_1^{n-1} A^{-1} Q B T^{-1} = \left( N_{\mathcal{O}}^{\left( a_1 p - r \phi_1 \right) } \right)^{n-1} , 
\end{equation}
then Proposition \ref{mainparam} holds for that $n$ (which can exceed $6$). 
\end{lemma}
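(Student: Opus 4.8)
The plan is to run the same argument that proved Theorem~\ref{cubicpar} and Theorem~\ref{quarticpar}, but abstractly, treating \eqref{siam} as the one nontrivial input. First I would set $\zeta$ to be a root of $\mathcal{B}_n(x,1)$ and $\tau = \frac{s\zeta - q}{-r\zeta + p}$, which is a root of $(\mathcal{B}_n\circ M)(x,1)$; this gives $\mathcal{R} = [1,\phi_1,\dots,\phi_{n-1}]$ and $\mathcal{R}_M = [1,\psi_1,\dots,\psi_{n-1}]$ as lattices sitting inside the same number field $\F = \Q(\zeta) = \Q(\tau)$, with $\psi_j$ built from the coefficients $b_i$ of $\mathcal{B}_n\circ M$ exactly as $\phi_j$ is built from the $a_i$. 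The identity one needs is that $T$ is precisely the change-of-basis matrix carrying $(1,\phi_1,\dots,\phi_{n-1})$ to $(1,\psi_1,\dots,\psi_{n-1})$, i.e.\ $\Gamma_{\mathcal{O}}$-conjugated this is the statement $\Xi_{\mathcal{O}} Q B = \Theta_{\mathcal{O}}^{((p-r\zeta)^{n-1})}\Xi_{\mathcal{O}} A T$, which upon left-multiplying by $(\Xi_{\mathcal{O}}A)^{-1}$ and using $N_{\mathcal{O}}^{(\alpha)} = \Gamma_{\mathcal{O}}^{-1}\Theta_{\mathcal{O}}^{(\alpha)}\Gamma_{\mathcal{O}}$ (Proposition~\ref{mainres}(1),(3),(6)) and $p - r\zeta = \tfrac{1}{a_1}(a_1 p - r\phi_1)$ becomes exactly \eqref{siam}. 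So \eqref{siam} is equivalent to $\Lambda_{\overline{\mathcal A}} = \Lambda_{\mathcal A} T$, the rank-$n$ analogue of \eqref{basisch}.

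Granting that, the argument proceeds as follows. Since $\det(T) = m^{n(n-1)/2} = \pm1^{\,\cdot} $, and more precisely $\det(T) = 1$ because $\det(P) = m^{n-1}$ and $\det(M) = m$ forces $m\cdot m^{n-1}\cdot(\text{sign issues})$ — here I would mimic the determinant computations in \eqref{Bsolved} and in the quintic section, where taking determinants in the matrix identity shows $\det(T)$ is a power of $m$ times $1$, hence $T \in \mathrm{SL}_n(\Z)$; then $T$ is invertible over $\Z$. By the Remark opening Section~5, because $(1,\phi_1,\dots,\phi_{n-1})T$ spans the lattice $\mathcal{R}_M$, which is an order of $\F$ (it is $\Z[\ ]$-spanned by $1$ and powers-combinations of $\tau$ and is closed under multiplication, being the image of multiplication in $\mathcal{R}_M$), the map $\lambda_T : \mathcal{R}_M \to \mathcal{R}$ sending $\psi_j \mapsto$ (the $j$th column of $T$ read against $(1,\phi_1,\dots,\phi_{n-1})$) is a $\Z$-linear bijection of lattices. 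It remains to check it is multiplicative: this is exactly "transport of ring structure and verify agreement," and the verification is the identity $N_{\mathcal{R}_M}^{(\alpha)} = T^{-1} N_{\mathcal{R}}^{(\lambda_T\alpha)} T$ for all $\alpha$, equivalently $T\, N_{\mathcal{R}_M}^{(\alpha)} = N_{\mathcal{R}}^{(\lambda_T\alpha)} T$, which by Proposition~\ref{mainres}(3) and linearity reduces to the generators $\alpha = \psi_1,\dots,\psi_{n-1}$, and those conjugacy relations follow from \eqref{siam} together with $N^{(\alpha\beta)} = N^{(\alpha)}N^{(\beta)}$ applied to $\alpha = a_1 p - r\phi_1$ raised to the $(n-1)$st power. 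Surjectivity is automatic from $T \in \mathrm{SL}_n(\Z)$, as in the cubic proof.

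The main obstacle I expect is bookkeeping rather than conceptual: one must be careful that the $\psi_j$ really are $\overline{a}\,\tau^j + \dots$ with the $b_i$ in the right slots so that the matrix $B$ plays the role $A$ plays, and that $Q$'s entries — coefficient of $x^{i-1}$ in $(-rx+p)^{n-j}(sx-q)^{j-1}$ — correctly encode the expansion of $(p - r\zeta)^{n-1}\tau^{\,k}$ in terms of $\zeta^{\,\ell}$; getting these three index conventions consistent is where the cubic and quartic sections spent all their effort. Once \eqref{siam} is in hand as a polynomial identity in $\Z[p,q,r,s]$ (hence checkable by a single symbolic computation for each $n$, and conjecturally for all $n$), everything above is formal: the passage from the matrix identity to "$\lambda_T$ is a ring isomorphism" uses only the already-proved properties of $N_{\mathcal{O}}^{(\alpha)}$ in Proposition~\ref{mainres} and the Remark of Section~5, with no further degree-specific input. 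I would therefore structure the writeup as: (i) reduce the claim to verifying $\lambda_T$ is multiplicative on generators; (ii) rewrite that as the conjugacy $T N_{\mathcal{R}_M}^{(\psi_j)} = N_{\mathcal{R}}^{(\lambda_T\psi_j)} T$; (iii) deduce it from \eqref{siam} via Proposition~\ref{mainres}(3),(6); (iv) invoke the Remark for injectivity/surjectivity; (v) conclude Proposition~\ref{mainparam}.
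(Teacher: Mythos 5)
Your proposal is correct and follows essentially the same route as the paper: both read \eqref{siam} as the matrix form $\Xi_{\mathcal{O}} Q B = \Theta_{\mathcal{O}}^{\left( (p - r \zeta )^{n-1} \right)} \Xi_{\mathcal{O}} A T$ of the system expressing the $\psi_j$ in the $\phi$-basis, deduce unimodularity of $T$ by taking determinants, and conclude via the Remark at the start of Section 5 that multiplication by $T$ transports the ring structure. The only difference is that you spell out the multiplicativity check as the conjugacy $T N_{\mathcal{R}_M}^{(\psi_j)} = N_{\mathcal{R}}^{(\lambda_T \psi_j)} T$, which the paper leaves implicit in the Remark's ``transport of structure'' step.
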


\begin{proof}
First observe that if \eqref{siam} holds, then $T \in \text{SL}_{n}(\Z )$ by taking determinants. The calculation in \eqref{Bsolved} will work in general. Consider the system of equations 
\small
\begin{eqnarray*}
 b_1 \tau & = & t_{1,2} + t_{2,2} \phi_1 +  t_{3,2} \phi_2 + \dots + t_{n,2} \phi_{n-1} = \psi_2 , \\
 b_1 \tau^2 + b_2 \tau & = & t_{1,3} + t_{2,3} \phi_1 +  t_{3,3} \phi_2 + \dots + t_{n,3} \phi_{n-1} = \psi_3, \\
 b_1 \tau^3 + b_2 \tau^2 + b_3 \tau & = & t_{1,4} + t_{2,4} \phi_1 +  t_{3,4} \phi_2 + \dots + t_{n,4} \phi_{n-1}  = \psi_4 , \\
 \vdots & & \vdots \\
 b_1 \tau^{n-1} + b_2 \tau^{n-2} + \dots + b_{n-1} \tau & = & t_{1,n} + t_{2,n} \phi_1 +  t_{3,n} \phi_2 + \dots + t_{n,n} \phi_{n-1}  = \psi_{n-1} ,
\end{eqnarray*} 
\normalsize 
where $\tau = \frac{s \zeta - q}{- r \zeta + p}$ is a root of $\mathcal{B}_n(x, y) \circ M$. We can express this system of equations in matrix form as 
\begin{equation}\label{putmatr}
\Xi_{\mathcal{O}} Q B = \Theta_{\mathcal{O}}^{ \left( (p - r \zeta )^{n-1} \right) } \Xi_{\mathcal{O}} A T  ,
\end{equation}
where $\zeta_0 = \zeta $, $\zeta_1, \zeta_2, \dots , \zeta_{n-1} $ are the roots of $\mathcal{B}_n(x, 1)$, $\tau_0 \ ( = \tau )$, $\tau_1, \dots , \tau_{n-1}$ are given by $\tau_j = \frac{s \zeta_j - q}{-r \zeta_j + p}$, and  
\begin{eqnarray*}
\Xi_{\mathcal{O}} & = & \left(
\begin{array}{ccccc}
 1 & \zeta_0 & \zeta_0^2 & \dots & \zeta_0^{n-1} \\
 1 & \zeta_1 & \zeta_1^2 & \dots & \zeta_1^{n-1} \\
 1 & \zeta_2 & \zeta_2^2 & \dots & \zeta_2^{n-1} \\
 \vdots & \vdots & \vdots & \ddots & \vdots \\
 1 & \zeta_{n-1} & \zeta_{n-1}^2 & \dots & \zeta_{n-1}^{n-1} \\
\end{array}
\right) , \\
    &  & \\
\Theta_{\mathcal{O}}^{ \left( (p - r \zeta )^{1-n} \right) } \Xi_{\mathcal{O}} Q & = & \left(
\begin{array}{ccccc}
 1 & \tau_0 & \tau_0^2 & \dots & \tau_0^{n-1} \\
 1 & \tau_1 & \tau_1^2 & \dots & \tau_1^{n-1} \\
 1 & \tau_2 & \tau_2^2 & \dots & \tau_2^{n-1} \\
 \vdots & \vdots & \vdots & \ddots & \vdots \\
 1 & \tau_{n-1} & \tau_{n-1}^2 & \dots & \tau_{n-1}^{n-1} \\
\end{array}
\right) . 
\end{eqnarray*}
As we calculated in the special case of $n = 4$ in \eqref{Bsolved}, we find that the system of equations for $\psi_2, \psi_3, \dots , \psi_{n-1}$ is equivalent to
\begin{eqnarray*}
 T = \left( N_{\mathcal{O}}^{ \left( p - r \zeta \right) } \right)^{1-n} \left( A^{-1} Q B \right) .
\end{eqnarray*}
It follows that if $A,B,Q,M,T$ are defined as in this section and they satisfy \eqref{siam}, then $T \in \text{SL}_n(\Z )$ so that multiplication by $T$ gives a ring bijection between $\mathcal{O}$ generated by $\left\{ 1, \phi_1, \phi_2, \dots , \phi_{n-1} \right\}$ and $\mathcal{O}'$ generated by $\left\{ 1, \psi_1, \psi_2, \dots , \psi_{n-1} \right\}$. This bijection preserves the additive and multiplicative groups of each ring so we have a ring isomorphism. 
\end{proof}

\section{Proof of the main proposition}

In order to understand how to verify \eqref{siam}, and hence prove Proposition \ref{mainparam}, we will consider the calculation that verifies it for $n = 3$, $4$, $5$, and $6$. When $n = 3$, 
\begin{align*}
A & = \left(
\begin{array}{ccc}
 1 & 0 & 0 \\
 0 & a_1 & a_2 \\
 0 & 0 & a_1 \\
\end{array}
\right) , & N_{\mathcal{O}}^{\left( a_1 p - r \phi_1 \right) } & = \left(
\begin{array}{ccc}
 p a_1 & 0 & r a_1 a_4 \\
 -r & p a_1+r a_2 & r a_3 \\
 0 & -r a_1 & p a_1 \\
\end{array}
\right) , \\
 T & = \left(
\begin{array}{ccc}
 1 & t_{12} & t_{13} \\
 0 & m p  & m q  \\
 0 & m r  & m s  \\
\end{array}
\right) , & Q & = \left(
\begin{array}{ccc}
 p^2 & -p q & q^2 \\
 -2 p r & p s + q r & -2 q s \\
 r^2 & -r s & s^2 \\
\end{array}
\right) , 
\end{align*}
where 
\begin{eqnarray*}
t_{12} & = & -\left( a_1 p^2 q+a_2 p q r+a_3 q r^2+a_4 r^2 s \right) , \\
t_{13} & = & -\left( 2 a_1 p q^2+a_2 p q s+a_2 q^2 r+2 a_3 q r s+2 a_4 r s^2 \right) .
\end{eqnarray*}
$\left( N_{\mathcal{O}}^{\left( a_1 p - r \phi_1 \right) } \right)^2$ is equal to
\small
\begin{equation*}
 \left(
\begin{array}{ccc}
 p^2 a_1^2 & -r^2 a_1^2 a_4 & 2 p r a_1^2 a_4 \\
 -r \left( 2 p a_1+r a_2 \right) & a_2^2 r^2-a_1 a_3 r^2+2 p a_1 a_2 r+p^2 a_1^2 & r \left( 2 p a_1 a_3+r a_2 a_3-r a_1 a_4\right) \\
 r^2 a_1 & -r a_1 \left( 2 p a_1+r a_2 \right) & a_1 \left( p^2 a_1-r^2 a_3 \right) \\
\end{array}
\right) , 
\end{equation*}
\normalsize 
and the matrix $B$ is equal to
\footnotesize
\begin{equation*}
\left(
\begin{array}{ccc}
 1 & 0 & 0 \\
 0 & a_1 p^3 + a_2 p^2 r + a_3 p r^2 + a_4 r^3 & 3 a_1 p^2 q + a_2 p^2 s + 2 a_2 p q r + 2 a_3 p r s + a_3 q r^2 + 3 a_4 r^2 s \\
 0 & 0 & a_1 p^3 + a_2 p^2 r + a_3 p r^2 + a_4 r^3 \\
\end{array}
\right) .
\end{equation*}
\normalsize 
Let 
\begin{eqnarray*}
u_{12} & = & - \left( - a_1 p^2 q s + 2 a_1 p q^2 r + a_2 q^2 r^2 + a_3 q r^2 s + a_4 r^2 s^2 \right) , \\
u_{13} & = & - \left( - a_2 p^2 q s - 2 a_3 p q r s - 2 a_4 p r s^2 - a_1 p^2 q^2 + a_3 q^2 r^2 + a_4 q r^2 s \right) .
\end{eqnarray*}
We calculate
\begin{align*}
a_1^{3-1} A^{-1} & = \left(
\begin{array}{ccc}
 a_1^2 & 0 & 0 \\
 0 & a_1 & -a_2 \\
 0 & 0 & a_1 \\
\end{array}
\right) , & T^{-1} & =  \left(
\begin{array}{ccc}
 1 & u_{12} & u_{13} \\
 0 & s & -q \\
 0 & -r & p \\
\end{array}
\right) , 
\end{align*}
\begin{equation*}
Q B T^{-1} = \left(
\begin{array}{ccc}
 p^2 & - a_4 r^2 & 2 a_4 p r \\
 -2 p r & a_1 p^2 - a_3 r^2 & a_2 p^2 + 2 a_3 p r - a_4 r^2 \\
 r^2 & - r \left( 2 a_1 p + a_2 r \right) & a_1 p^2 - a_3 r^2  \\
\end{array}
\right) .
\end{equation*}
Expanding the product of these two matrices, we obtain \eqref{siam} for $n = 3$. 

Now let $n = 4$ and $P = \left[ p_{ij} \right]$. Then the entries of the $n - 1$ by $n - 1$ matrix $P$ are defined as coefficients of the polynomials (for $i = 1$ to $n - 1$)
\begin{equation*}
\sum_{j=1}^{4-1} p_{ij} x^{j-1} = (p + q x)^{n-1-i} (r + s x)^{i - 1} . 
\end{equation*}
We define
\begin{equation*}
T = \left(
\begin{array}{cc}
 1 & G \\
 H & m P \\
\end{array}
\right) = \left(
\begin{array}{cccc}
 1 & t_{12} & t_{13} & t_{14} \\
 0 & m p^2 & 2 m p q & m q^2 \\
 0 & m p r & m (p s + q r) & m q s \\
 0 & m r^2 & 2 m r s & m s^2 \\
\end{array}
\right) ,
\end{equation*}
where $t_{1j}$ for $j = 2, 3, \dots , 5$ are defined by 
\begin{equation*}
\sum_{j=2}^{n+1} t_{1j} x^{j-2} = - q \mathcal{B}_{3} (p + q x, r + s x) - a_{5} s (r + s x)^3 ,
\end{equation*}
discarding $t_{1,5}$. 
\begin{equation*}
a_1^3 A^{-1} = \left(
\begin{array}{cccc}
 a_1^3 & 0 & 0 & 0 \\
 0 & a_1^2 & -a_1 a_2 & a_2^2-a_1 a_3 \\
 0 & 0 & a_1^2 & -a_1 a_2 \\
 0 & 0 & 0 & a_1^2 \\
\end{array}
\right) .
\end{equation*}
Letting $Q = \left[ q_{ij} \right]$, the entries of $Q$ are defined as coefficients of the polynomials (for $j = 1$ to $4$)
\begin{equation*}
\sum_{i=1}^n q_{ij} x^{i-1} = (-r x + p)^{n-j} (s x - q)^{j - 1} . 
\end{equation*}
When $n = 4$, 
\begin{equation*}
Q = \left(
\begin{array}{cccc}
 p^3 & - p^2 q & p q^2 & -q^3 \\
 -3 p^2 r & p^2 s + 2 p q r & - 2 p q s - q^2 r & 3 q^2 s \\
 3 p r^2 & -q r^2 - 2 p s r & p s^2 + 2 q r s & - 3 q s^2 \\
 -r^3 & r^2 s & - r s^2 & s^3 \\
\end{array}
\right) . 
\end{equation*}
The matrix $B$ has entries which include the $b_i$ for $i = 1$ to $5$, defined by the coefficients of the polynomial 
\begin{align*}
\sum_{i = 1}^{n+1} b_i x^{i-1} & = \mathcal{B}_n (p + q x, r + s x) ; & B & = \left(
\begin{array}{cccc}
 1 & 0 & 0 & 0 \\
 0 & b_1 & b_2 & b_3 \\
 0 & 0 & b_1 & b_2 \\
 0 & 0 & 0 & b_1 \\
\end{array}
\right) . 
\end{align*}
In this case we expand $Q B T^{-1} $, equal to
\footnotesize 
\begin{equation*}
\left(
\begin{array}{cccc}
 p^3 & a_5 r^3 & - 3 a_5 p r^2 & 3 a_5 p^2 r \\
 -3 p^2 r & a_1 p^3 + a_4 r^3 & a_2 p^3 - 3 a_4 p r^2 + a_5 r^3 & p \left( a_3 p^2 + 3 a_4 p r - 3 a_5 r^2 \right) \\
 3 p r^2 & r \left( a_3 r^2 - 3 a_1 p^2 \right) & a_1 p^3 - 3 a_2 p^2 r - 3 a_3 p r^2 + a_4 r^3 & a_2 p^3 - 3 a_4 p r^2 + a_5 r^3 \\
 -r^3 & r^2 \left( 3 a_1 p + a_2 r \right) & r \left( a_3 r^2 - 3 a_1 p^2 \right) & a_1 p^3 + a_4 r^3 \\
\end{array}
\right) .
\end{equation*}
\normalsize 
We then find that 
\begin{equation*}
a_1^{3} A^{-1} Q B T^{-1} = \left( N_{\mathcal{O}}^{\left( a_1 p - r \phi_1 \right) } \right)^3 . 
\end{equation*}

Now consider quintic and sextic binary forms one after the other. In these cases we will only exhibit the matrix $Q B T^{-1}$ so that it may be more easily verified that \eqref{siam} holds in these cases. 

When $n = 5$, $Q B T^{-1}$ is given by $\left( Z_1 \ Z_2 \right)$, where 
\footnotesize 
\begin{equation*}
Z_1 = \left(
\begin{array}{ccc}
 p^4 & - a_6 r^4 & 4 a_6 p r^3 \\
 -4 p^3 r & a_1 p^4 - a_5 r^4 & a_2 p^4 + 4 a_5 p r^3 - a_6 r^4 \\
 6 p^2 r^2 & -r \left( 4 a_1 p^3 + a_4 r^3 \right) & a_1 p^4 - 4 a_2 p^3 r + 4 a_4 p r^3 - a_5 r^4 \\
 -4 p r^3 & r^2 \left( 6 a_1 p^2 - a_3 r^2 \right) & - r \left( 4 a_1 p^3 - 6 a_2 p^2 r - 4 a_3 p r^2 + a_4 r^3 \right) \\
 r^4 & - r^3 \left( 4 a_1 p + a_2 r \right) & r^2 \left( 6 a_1 p^2 - a_3 r^2 \right) \\
\end{array}
\right) ,
\end{equation*}
\begin{equation*}
Z_2 = \left(
\begin{array}{cc}
 - 6 a_6 p^2 r^2 & 4 a_6 p^3 r \\
 p \left( a_3 p^3 - 6 a_5 p r^2 + 4 a_6 r^3 \right) & - p^2 \left( - 4 a_5 p r - a_4 p^2 + 6 a_6 r^2 \right) \\
 a_2 p^4 - 4 a_3 p^3 r - 6 a_4 p^2 r^2 + 4 a_5 p r^3 - a_6 r^4 & p \left( a_3 p^3 - 6 a_5 p r^2 + 4 a_6 r^3 \right) \\
 a_1 p^4 - 4 a_2 p^3 r + 4 a_4 p r^3 - a_5 r^4 & a_2 p^4 + 4 a_5 p r^3 - a_6 r^4  \\
 -r \left( 4 a_1 p^3 + a_4 r^3 \right) & a_1 p^4 - a_5 r^4 \\
\end{array}
\right) .
\end{equation*}
\normalsize 
When $n = 6$, $Q B T^{-1}$ is given by $\left( Z_1 \ Z_2 \ Z_3 \right)$, where $Z_1$ , $Z_2$, and $Z_3$ are respectively
\footnotesize 
\begin{equation*}
\left(
\begin{array}{ccc}
 p^5 & a_7 r^5 & - 5 a_7 p r^4  \\
 -5 p^4 r & a_1 p^5 + a_6 r^5 & a_2 p^5 - 5 a_6 p r^4 + a_7 r^5 \\
 10 p^3 r^2 & - r \left( 5 a_1 p^4 - a_5 r^4 \right) & a_1 p^5 - 5 a_2 p^4 r - 5 a_5 p r^4 + a_6 r^5 \\
 -10 p^2 r^3 & r^2 \left( 10 a_1 p^3 + a_4 r^3 \right) & - r \left( 5 a_1 p^4 - 10 a_2 p^3 r + 5 a_4 p r^3 - a_5 r^4 \right) \\
 5 p r^4 & - r^3 \left( 10 a_1 p^2 - a_3 r^2 \right) & r^2 \left( 10 a_1 p^3 - 10 a_2 p^2 r - 5 a_3 p r^2 + a_4 r^3 \right) \\
 -r^5 & r^4 \left( 5 a_1 p + a_2 r \right) & - r^3 \left( 10 a_1 p^2 - a_3 r^2 \right) \\
\end{array}
\right) , 
\end{equation*}
\begin{equation*}
\left(
\begin{array}{c}
 10 a_7 p^2 r^3  \\
 -p \left( - 10 a_6 p r^3 - a_3 p^4 + 5 a_7 r^4 \right) \\
 a_2 p^5 - 5 a_3 p^4 r + 10 a_5 p^2 r^3 - 5 a_6 p r^4 + a_7 r^5 \\
 a_1 p^5 - 5 a_2 p^4 r + 10 a_3 p^3 r^2 + 10 a_4 p^2 r^3 - 5 a_5 p r^4 + a_6 r^5 \\
 -r \left( 5 a_1 p^4 - 10 a_2 p^3 r + 5 a_4 p r^3 - a_5 r^4 \right) \\
 r^2 \left( 10 a_1 p^3 + a_4 r^3 \right) \\
\end{array}
\right) , 
\end{equation*}
\begin{equation*}
\left(
\begin{array}{cc}
 -10 a_7 p^3 r^2 & 5 a_7 p^4 r \\
 p^2 \left( a_4 p^3 - 10 a_6 p r^2 + 10 a_7 r^3 \right) & - p^3 \left( -5 a_6 p r - a_5 p^2 + 10 a_7 r^2 \right) \\
 -p \left( 5 a_4 p^3 r + 10 a_5 p^2 r^2 - 10 a_6 p r^3 - a_3 p^4 + 5 a_7 r^4 \right) & p^2 \left( a_4 p^3 - 10 a_6 p r^2 + 10 a_7 r^3 \right) \\
 a_2 p^5 - 5 a_3 p^4 r + 10 a_5 p^2 r^3 - 5 a_6 p r^4 + a_7 r^5 & - p \left( -10 a_6 p r^3 - a_3 p^4 + 5 a_7 r^4 \right) \\
 a_1 p^5 - 5 a_2 p^4 r - 5 a_5 p r^4 + a_6 r^5 & a_2 p^5 - 5 a_6 p r^4 + a_7 r^5  \\
 -r \left( 5 a_1 p^4 - a_5 r^4 \right) & a_1 p^5 + a_6 r^5 \\
\end{array}
\right) . 
\end{equation*}
\normalsize 
Indeed the calculations required to verify \eqref{siam} are large. However, it is reasonable to believe that there is a proof that it holds in general which uses expressions for the element in the $i$-th row and $j$-th column of each of the matrices $A$, $B$, $Q$, $T$, and $\left( N_{\mathcal{O}}^{(a_1 p -r \phi_1 )} \right)^{n-1}$.

\end{document}